\RequirePackage{fix-cm}
\documentclass[a4paper,oneside,leqno,10pt,final]{article}

\usepackage{a4wide}
\usepackage[utf8]{inputenc}
\usepackage[margin=26mm,headheight=14pt]{geometry}

\usepackage{titlesec}

\usepackage{bm}

\usepackage{enumitem}
\setlist{itemsep=3pt,parsep=1pt,topsep=3pt,partopsep=1pt}
\setlist[enumerate,1]{label=(\arabic*)}
\setlist[enumerate,2]{label=(\alph*)}

\usepackage[dvipsnames]{xcolor}

\usepackage{graphicx}

\usepackage{mathtools}
\mathtoolsset{showonlyrefs}

\usepackage[only,llbracket,rrbracket]{stmaryrd}

\usepackage[T1]{fontenc}

\usepackage[p,osf]{scholax}
\usepackage{amsmath,amsthm}
\usepackage[scaled=1.075,ncf,vvarbb]{newtxmath}

\usepackage{chngcntr}

\usepackage{url}
\usepackage[colorlinks=true,linkcolor=blue,citecolor=Violet,linktocpage=false]{hyperref}
\newtheorem{theorem}{Theorem}[section]

\newtheorem{lemma}[theorem]{Lemma}
\newtheorem{corollary}[theorem]{Corollary}
\theoremstyle{definition}

\theoremstyle{remark}\newtheorem{remark}[theorem]{Remark}
\newcommand{\R}{\mathbb R}
\newcommand{\cH}{\mathcal H}
\newcommand{\cL}{\mathcal L}
\newcommand{\cC}{\mathcal C}
\newcommand{\cc}{\mathbf{c}}
\newcommand{\cY}{\mathcal Y}
\newcommand{\cW}{\mathcal W}

\newcommand{\dd}{\,\mathrm d}
\newcommand{\pp}{\mathbf{p}}
\newcommand{\np}{\bm{\xi}}

\newcommand{\qq}{\mathbf{q}}
\newcommand{\restr}{\mathbin{\vrule height 1.4ex depth -0.3ex width 0.07ex\vrule height 0.07ex depth 0ex width 0.75ex}}

\DeclareMathOperator{\Lip}{Lip}\DeclareMathOperator{\spt}{spt}
\DeclareMathOperator{\id}{id}

\DeclareMathOperator{\ap}{ap}

\DeclareMathOperator{\Der}{D}
\DeclareMathOperator{\trace}{trace}

\usepackage{microtype}
\usepackage{needspace}
\hypersetup{pdftitle={The optimal isoperimetric inequality for varifolds},
 pdfauthor={Mario Santilli}}
\newcommand{\Ciso}{\mathsf C_{d,N}}

\DeclareMathOperator{\dist}{dist}
\title{The optimal isoperimetric inequality for varifolds}
\author{Mario Santilli}

\begin{document}
\maketitle

\begin{abstract}
We prove the Brendle isoperimetric inequality for varifolds of finite
mass and finite total first variation, allowing an arbitrary singular
part. The distributional mean curvature is assumed only integrable.
The result applies to integral varifolds and, more generally,
to rectifiable varifolds with a positive lower density bound and weight
measure concentrated on countably many $C^2$ submanifolds. The proof
uses optimal transport and a locality theorem for the tangential
Laplacian of an ambient convex function. The isoperimetric constant is
sharp in codimensions one and two.
\end{abstract}

\section{Introduction}

We prove the isoperimetric inequality of Brendle for finite-mass
rectifiable $d$-varifolds $V=\mathbf v(S,\theta)$ in $\R^N$, $2\le d<N$,
with a positive lower density bound and weight measure
$\mu=\|V\|$ concentrated on countably many $C^2$ submanifolds.
We allow a generalized boundary: the first variation has the form
$\delta V=-H\mu+\eta\sigma$, where
$H\in L^1(\mu;\R^N)$,
$\sigma$ is a finite measure singular with respect to $\mu$,
$|\eta|=1$ $\sigma$-almost everywhere. No condition is imposed on the
support of $\sigma$.
Writing $M=\mu(\R^N)>0$ and $\theta\ge\theta_0>0$, our main result is
\[
 \sigma(\R^N)+\int|H|\dd\mu
 \ge \Ciso M^{-1/d}\int\theta^{1/d}\dd\mu
 \ge \Ciso\theta_0^{1/d}M^{(d-1)/d},
\]
where $\Ciso$ is given in \eqref{eq:Brendle-constant}.
In codimensions one and two, $\Ciso=d\omega_d^{1/d}$, with
$\omega_d$ the volume of the unit $d$-ball, and equality is attained by
flat round balls of constant multiplicity.
For integral varifolds, only finite mass and finite total first
variation are required; see Corollary~\ref{cor:integral-iso}.
Compactness of the support is not required.

Non-sharp isoperimetric and Sobolev inequalities for varifolds were
established by Allard~\cite[Section 7]{Allard1972} and
Michael--Simon~\cite{MichaelSimon1973}.
Almgren~\cite{Almgren1986} proved the sharp Euclidean isoperimetric
inequality for mass-minimizing integral currents in arbitrary dimension
and codimension.
Brendle~\cite{Brendle2021} proved a Sobolev inequality for smooth
submanifolds, with the sharp Euclidean constant in codimensions one and
two, and Brendle--Eichmair~\cite{BrendleEichmairTransport} gave an
optimal-transport proof of the corresponding isoperimetric inequality.
We extend the latter approach by identifying the absolutely continuous
part of the tangential Laplacian of an ambient convex function.
For real multiplicities, this identity includes the tangential
mean-curvature term $H\cdot\nabla^V u$.

\subsection*{Notation}

We define $\cc:\R^N\times\R^N\to\R$ by
$\cc(y,\xi)=y\cdot\xi$. For $\xi\in\R^N$ we write
$\cc_\xi(y)=y\cdot\xi$. The coordinate projections are
\[
 \pp(x,\omega)=x,\qquad \qq(x,\omega)=\omega.
\]
We use $B_r^N(a)=\{x:|x-a|<r\}$ for open balls and
$B^N=\{\xi:|\xi|\le1\}$ for the closed unit ball. Put
$\omega_k=\cL^k(B^k)$. If $ A, B : X \rightarrow X $ are linear operators on a finite dimensional inner product space $ X $, we set  $$A\bullet B=\trace(A^TB) \quad \text{and} \quad  |A| = (A\bullet A)^{1/2}. $$

\section{Optimal transport background}\label{sec:plans}
\subsection{Couplings and duality}

Suppose that $\Gamma\subset\R^N$ is nonempty and compact, $\alpha$ is a
probability measure supported on $\Gamma$, $K=B^N$, and $\beta$ is a
probability measure on $K$. Denote by $\Pi(\alpha,\beta)$ the probability
measures $\pi$ on $\Gamma\times K$ such that
\[
 \pp_\#\pi=\alpha,\qquad \qq_\#\pi=\beta.
\]
Such a measure is a \emph{coupling} with marginals $\alpha,\beta$.
For $(a,b)\in C(\Gamma)\times C(K)$ write
$(a\oplus b)(x,\xi)=a(x)+b(\xi)$. The marginal conditions give
\begin{equation}\label{eq:marginal-integral}
 \int_{\Gamma\times K}(a\oplus b)\dd\pi
 =\int_\Gamma a\dd\alpha+\int_K b\dd\beta.
\end{equation}
The set of couplings is nonempty, since it contains $\alpha\otimes\beta$.
The order of the factors is fixed: the eventual transport map goes from
$K$ to $\Gamma$, although the coupling is written on $\Gamma\times K$.

\begin{lemma}[cf.\ {\cite[Lemma 1.20]{AmbrosioGigli}}]\label{lem:graph}
Suppose $T:K\to\Gamma$ is Borel, $\pi\in\Pi(\alpha,\beta)$, and
$\pi\{(x,\xi):x=T(\xi)\}=1$. Then
\[
 \pi=(T,\id_K)_\#\beta,\qquad T_\#\beta=\alpha.
\]
\end{lemma}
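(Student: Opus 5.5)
The plan is to show that $\pi$ and $(T,\id_K)_\#\beta$ agree on products of Borel sets. Such products form a $\pi$-system generating the Borel $\sigma$-algebra of $\Gamma\times K$, and both are probability measures, so agreement on products gives equality everywhere. First, the graph set $G=\{(x,\xi)\in\Gamma\times K: x=T(\xi)\}$ is Borel. It is the preimage of the diagonal (a closed set) under the Borel map $(x,\xi)\mapsto(x,T(\xi))$. So the hypothesis $\pi(G)=1$ makes sense, and $\pi(E)=\pi(E\cap G)$ for every Borel $E$.

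Fix Borel sets $A\subset\Gamma$ and $B\subset K$. A pair $(x,\xi)\in G$ lies in $A\times B$ exactly when $\xi\in B\cap T^{-1}(A)$. Hence
\[
 (A\times B)\cap G=\bigl(\Gamma\times (B\cap T^{-1}(A))\bigr)\cap G .
\]
Applying $\pi(\cdot)=\pi(\cdot\cap G)$ to both sides and then using $\qq_\#\pi=\beta$ gives
\[
 \pi(A\times B)=\pi\bigl(\Gamma\times(B\cap T^{-1}(A))\bigr)=\beta\bigl(B\cap T^{-1}(A)\bigr).
\]
The right-hand side is $((T,\id_K)_\#\beta)(A\times B)$, since $(T(\xi),\xi)\in A\times B$ if and only if $\xi\in B\cap T^{-1}(A)$. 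The two measures therefore coincide on the generating $\pi$-system. Dynkin's $\pi$–$\lambda$ theorem, or the uniqueness of measure extension, then yields $\pi=(T,\id_K)_\#\beta$.

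For the second identity, push forward by $\pp$. Since $\pp\circ(T,\id_K)=T$, we get $\alpha=\pp_\#\pi=\pp_\#(T,\id_K)_\#\beta=T_\#\beta$.

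There is no serious obstacle here. The only points needing care are measure-theoretic: checking that $G$ is Borel, so that $\pi(G)=1$ is meaningful, and that products of Borel sets generate the Borel $\sigma$-algebra on $\Gamma\times K$. The latter holds because both factors are separable metric spaces.
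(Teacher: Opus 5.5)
Your proof is correct and is essentially the paper's argument. Both use that on the graph $\{x=T(\xi)\}$ everything depends only on $\xi$, then apply the second marginal $\qq_\#\pi=\beta$, and finally push forward under $\pp$. The one difference is that the paper tests directly against arbitrary bounded Borel $F$. Equivalently, your identity $E\cap G=\bigl(\Gamma\times(T,\id_K)^{-1}(E)\bigr)\cap G$ holds for every Borel $E$, not just rectangles, so the Dynkin $\pi$--$\lambda$ step is unnecessary.
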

\begin{proof}
Let $C=\{(x,\xi):x=T(\xi)\}$ and $Q=(T,\id_K)$. On $C$ one has
$Q\circ\qq=\id_C$. Hence, for a bounded Borel function $F$,
\[
 \int F\dd\pi=\int F(T(\xi),\xi)\dd\pi(x,\xi)
             =\int F(T(\xi),\xi)\dd\beta(\xi).
\]
This is the first assertion. Taking its first marginal proves the second.
\end{proof}

\begin{theorem}[Compact Kantorovich duality]\label{thm:duality}
For $k\in C(\Gamma\times K)$,
\begin{equation}\label{eq:duality}
 \max_{\pi\in\Pi(\alpha,\beta)}\int k\dd\pi
 =\inf_{\substack{(a,b)\in C(\Gamma)\times C(K)\\a\oplus b\ge k}}
       \left(\int a\dd\alpha+\int b\dd\beta\right).
\end{equation}
\end{theorem}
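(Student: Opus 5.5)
The argument has three parts: weak duality, existence of a maximizer, and the absence of a duality gap. For the gap we use convex duality in the Banach space $C(\Gamma\times K)$, via Hahn--Banach or the Fenchel--Rockafellar theorem. Throughout, $\Gamma\times K$ is compact, so $C(\Gamma\times K)$ is a separable Banach space whose dual is the space of finite signed Radon measures (Riesz).

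\emph{Weak duality.} Take $\pi\in\Pi(\alpha,\beta)$ and an admissible pair $(a,b)$, so $a\oplus b\ge k$. Since $\pi$ is a nonnegative measure, \eqref{eq:marginal-integral} gives
\[
 \int k\dd\pi\le\int(a\oplus b)\dd\pi=\int a\dd\alpha+\int b\dd\beta .
\]
Hence $\sup\le\inf$.

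\emph{Existence of a maximizer.} $\Pi(\alpha,\beta)$ is nonempty because it contains $\alpha\otimes\beta$. It is a subset of the probability measures on the compact space $\Gamma\times K$, which is weak-$*$ compact. It is closed, because the marginal constraints $\int a\circ\pp\dd\pi=\int a\dd\alpha$ and $\int b\circ\qq\dd\pi=\int b\dd\beta$ are tested against continuous functions. The map $\pi\mapsto\int k\dd\pi$ is weak-$*$ continuous since $k$ is continuous. So the supremum is attained and may be written as a max.

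\emph{No gap.} On $E=C(\Gamma\times K)$ define two convex functions:
\begin{itemize}
\item $\Theta(u)=0$ if $u\ge -k$ and $+\infty$ otherwise;
\item $\Xi(u)=\int a\dd\alpha+\int b\dd\beta$ if $u=a\oplus b$, and $+\infty$ otherwise.
\end{itemize}
$\Xi$ is well defined: if $a\oplus b=a'\oplus b'$, then $a-a'$ and $b'-b$ equal the same constant $c$, and the two values of the integral differ by $c\,\alpha(\Gamma)-c\,\beta(K)=0$ because both measures are probabilities. With these definitions, the infimum in \eqref{eq:duality} equals
\[
 \inf_{u\in E}\big(\Theta(-u)+\Xi(u)\big)=\inf_{u\in E}\big(\Theta(u)+\Xi(-u)\big)\quad\text{(up to the sign convention)}.
\]

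The step needing the most care is checking the qualification hypothesis of Fenchel--Rockafellar: one of the functions must be finite and continuous at a point where the other is finite. Take $u_0\equiv 1+\|k\|_\infty$, which equals $a\oplus b$ with constant $a,b$, so $\Xi(-u_0)$ or $\Xi(u_0)$ is finite. Every $u$ with $\|u-u_0\|_\infty<1$ satisfies $u>-k$, so $\Theta$ vanishes on a whole ball around $u_0$ and is continuous there.

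The theorem then gives
\[
 \inf=\max_{\pi\in E^*}\big(-\Theta^*(-\pi)-\Xi^*(\pi)\big).
\]

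\emph{Conjugates.} Write $\pi\in E^*$ as a signed Radon measure.
\begin{itemize}
\item $\Xi^*(\pi)=\sup_{a,b}\big(\int(a\oplus b)\dd\pi-\int a\dd\alpha-\int b\dd\beta\big)$. This is $0$ if $\pp_\#\pi=\alpha$ and $\qq_\#\pi=\beta$, and $+\infty$ otherwise.
\item $\Theta^*(-\pi)=\sup_{u\ge -k}\big(-\int u\dd\pi\big)$. This is $+\infty$ unless $\pi\ge0$, and equals $\int k\dd\pi$ when $\pi\ge0$.
\end{itemize}
The sign bookkeeping here should be checked carefully, since it is easy to flip. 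With it done, the dual maximum is exactly $\max_{\pi\in\Pi(\alpha,\beta)}\int k\dd\pi$, which proves \eqref{eq:duality}.

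An alternative to Fenchel--Rockafellar is to apply Hahn--Banach directly to separate the open convex cone $\{u: u>k\ \text{somewhere}\ldots\}$ from the affine image of the admissible pairs; this yields the same measure $\pi$.
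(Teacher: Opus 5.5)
Your Fenchel--Rockafellar argument on $E=C(\Gamma\times K)$ is the standard self-contained proof for compact spaces and is sound in structure. However, the sign bookkeeping you flagged is wrong as written.

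\emph{The sign error.} With $\Theta$ the indicator of $\{u\ge -k\}$, both $\inf_u(\Theta(-u)+\Xi(u))$ and $\inf_u(\Theta(u)+\Xi(-u))$ range over pairs with $a\oplus b\le k$. They therefore equal $-\infty$ (take $a\equiv -n$, $b\equiv 0$), not the right-hand side of \eqref{eq:duality}. Moreover, Fenchel--Rockafellar applies to $\Theta+\Xi$, not to a reflected sum. Your own conjugate $\Theta^*(-\pi)=\int k\dd\pi$ then makes the dual value $\max_{\Pi(\alpha,\beta)}\bigl(-\int k\dd\pi\bigr)$, not $\max_{\Pi(\alpha,\beta)}\int k\dd\pi$.

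\emph{The fix.} Let $\Theta$ be the indicator of $\{u\ge k\}$ and apply the theorem to $\Theta+\Xi$ itself, with no reflection. Then:
\begin{itemize}
\item $\inf_E(\Theta+\Xi)$ is literally the right-hand side of \eqref{eq:duality};
\item $\Theta^*(-\pi)=\sup_{u\ge k}\bigl(-\int u\dd\pi\bigr)$ equals $-\int k\dd\pi$ for $\pi\ge0$ and $+\infty$ otherwise;
\item $\Xi^*$ is as you computed;
\item your point $u_0\equiv1+\|k\|_\infty$ still qualifies, since the unit ball around it lies in $\{u>k\}$.
\end{itemize}
Equivalently, keep your $\Theta$, use $\Theta+\Xi$, and replace $k$ by $-k$ at the end. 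Nonnegative measures with marginals $\alpha,\beta$ are exactly the elements of $\Pi(\alpha,\beta)$, so this closes the argument.

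Your separate weak-duality and weak-$*$ compactness steps then become redundant, because Fenchel--Rockafellar already gives attainment on the measure side. Drop the closing Hahn--Banach aside as well: the set of $u$ exceeding $k$ somewhere is not convex.

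\emph{Comparison with the paper.} The paper does not prove this statement; it cites the general Kantorovich duality (Villani, Theorem~5.10(i), with cost $-k$, or Edwards' compact version). Your route is elementary and self-contained. It uses compactness essentially: that is what makes $E^*$ the space of signed Radon measures and lets $\Theta$ be continuous on a sup-norm ball. The cited result instead covers Polish spaces and lower semicontinuous costs, which is more than is needed here. Neither your argument nor the formula \eqref{eq:duality} by itself gives attainment of the dual infimum. The paper obtains that separately in Theorem~\ref{thm:dual}, from Villani's Theorem~5.10(iii) via conjugate pairs.
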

\begin{proof}
	See  \cite[Theorem~5.10(i)]{Villani}, applied to the cost
	$-k$; equivalently, or alternatively \cite[Theorem~5.1]{Edwards} on the compact
	metric spaces $\Gamma$ and $K$.
\end{proof}

\begin{theorem}[Convex dual potentials]\label{thm:dual}
For $k=\cc$ the dual infimum is attained. The minimizing pair can be
chosen as the restrictions of convex functions $u,\psi:\R^N\to\R$ with
$\Lip u\le1$ and $\Lip\psi\le R:=\max_{x\in\Gamma}|x|$, satisfying
\begin{align}
 \psi(\xi)&=\max_{x\in\Gamma}\{x\cdot\xi-u(x)\}
                  &&(\xi\in\R^N),\label{eq:psi-envelope}\\
 u(z)&=\max_{\xi\in K}\{z\cdot\xi-\psi(\xi)\}
                  &&(z\in\R^N).\label{eq:u-envelope}
\end{align}
Consequently,
\[
 \max_{\pi\in\Pi(\alpha,\beta)}\int\cc\dd\pi
 =\int_\Gamma u\dd\alpha+\int_K\psi\dd\beta.
\]
\end{theorem}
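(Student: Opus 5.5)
Starting from Theorem~\ref{thm:duality} with $k=\cc$, the plan is to improve an arbitrary admissible pair by a double $\cc$-transform, then use Arzelà--Ascoli and the normalization freedom $(a+t,b-t)$ to extract a minimizer.

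\emph{Double $\cc$-transform.} Take any admissible pair $(a,b)\in C(\Gamma)\times C(K)$ with $a\oplus b\ge\cc$. Define
\[
 \psi(\xi)=\max_{x\in\Gamma}\{x\cdot\xi-a(x)\}\quad(\xi\in\R^N),\qquad
 u(z)=\max_{\xi\in K}\{z\cdot\xi-\psi(\xi)\}\quad(z\in\R^N).
\]
Both maxima exist by compactness and continuity. Each is a supremum of affine functions, so $\psi$ and $u$ are convex and finite. Their slopes are $x\in\Gamma$ (with $|x|\le R$) and $\xi\in K$ (with $|\xi|\le1$), so $\Lip\psi\le R$ and $\Lip u\le1$.

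Admissibility gives $\psi\le b$ on $K$. By construction $\psi(\xi)\ge x\cdot\xi-a(x)$, so $u(x)\le a(x)$ on $\Gamma$. Also $u\oplus\psi\ge\cc$ on $\Gamma\times K$ directly from the definition of $u$. Hence $(u|_\Gamma,\psi|_K)$ is admissible and does not increase the dual cost. Moreover \eqref{eq:u-envelope} holds by definition.

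\emph{Recovering \eqref{eq:psi-envelope}.} One needs $\psi=\max_{x\in\Gamma}\{x\cdot\xi-u(x)\}=:\tilde\psi$. Since $u\le a$ on $\Gamma$, we get $\tilde\psi\ge\psi$. Conversely, $u(x)\ge x\cdot\xi-\psi(\xi)$ for $\xi\in K$ gives $\tilde\psi\le\psi$ on $K$, but not on all of $\R^N$. So rather than proving this equality, I would apply one more transform: replace $\psi$ by $\tilde\psi$, and then $u$ by the transform of $\tilde\psi$, which I claim equals $u$. Indeed, $\tilde\psi\ge\psi$ gives $\max_{\xi\in K}\{z\cdot\xi-\tilde\psi(\xi)\}\le u(z)$. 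On the other hand $\tilde\psi=\psi$ on $K$, because both inequalities hold there, so this maximum, taken only over $K$, equals $u(z)$ exactly. Thus the pair $(u,\tilde\psi)$ satisfies both \eqref{eq:psi-envelope} and \eqref{eq:u-envelope}, has the same Lipschitz bounds, and has the same dual cost, since $\tilde\psi=\psi$ on $K$.

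\emph{Attainment.} Take a minimizing sequence and replace each element by its improved pair $(u_n,\psi_n)$. Normalize with $t$ so that $\psi_n(0)=0$; this keeps the envelope relations, since $u\mapsto u+t$ and $\psi\mapsto\psi-t$ commute with the maxima. Then $u_n(0)=\max_{\xi\in K}\{-\psi_n(\xi)\}$ is bounded, because $|\psi_n|\le R$ on $K$. The families are equi-Lipschitz and locally bounded, so Arzelà--Ascoli gives locally uniform limits $u,\psi$ along a subsequence. These are convex and keep the same Lipschitz bounds.

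The envelope identities pass to the limit because the maxima range over compact sets and the convergence is uniform on $\Gamma$, on $K$, and on compact sets of $z$ or $\xi$. Admissibility and the cost pass to the limit by uniform convergence on $\Gamma\times K$. So the limit attains the infimum, and Theorem~\ref{thm:duality} yields the final identity.

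The main obstacle I expect is the global validity of \eqref{eq:psi-envelope} on all of $\R^N$ rather than only on $K$. The extra $\cc$-transform step handles it, and the limit step then needs uniformity on compacts, which is the only other delicate point.
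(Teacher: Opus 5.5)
Your proof is correct, but it takes a different route from the paper. The paper does not argue attainment directly. It cites Villani's duality theorem (Theorem~5.10(iii), with Definition~5.2 and Proposition~5.8) for the cost $-\cc$, which supplies an optimal pair of mutually $c$-conjugate potentials on $\Gamma$ and $K$. In Villani's treatment this existence comes from the $c$-cyclical monotonicity of optimal plans and a Rockafellar-type construction. The paper then simply asserts that the envelope formulas give the ambient convex extensions and the Lipschitz bounds.

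You instead run the direct method that compactness of $\Gamma$ and $K$ allows. Conjugation improves any admissible pair, the normalization $\psi_n(0)=0$ gives uniform bounds, and Arzel\`a--Ascoli extracts a limit. The envelope identities, admissibility and the cost then all pass to the limit by uniform convergence on $\Gamma$ and $K$, and Theorem~\ref{thm:duality} is used only for the final identity. Your argument is self-contained and needs neither an optimal plan nor cyclical monotonicity; the citation is shorter and holds in far greater generality.

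You also note that $\max_{x\in\Gamma}\{x\cdot\xi-a(x)\}$ and $\max_{x\in\Gamma}\{x\cdot\xi-u(x)\}$ may differ off $K$, and you repair this by recomputing $\psi$ from $u|_\Gamma$. This is exactly the consistency point behind the paper's one-line extension remark. Each envelope formula only reads the other potential on $\Gamma$ or on $K$, so a pair that is conjugate on $\Gamma\times K$ extends compatibly to $\R^N$. You have made this explicit.
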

\begin{proof}
	See \cite[Theorem~5.10(iii), Definition~5.2 and
	Proposition~5.8]{Villani}, applied to the cost $-\cc$ on $\Gamma\times K$.
	The optimal potentials are chosen as a $(-\cc)$-conjugate pair, with
	signs adjusted to our convention. The envelopes
	\eqref{eq:psi-envelope}--\eqref{eq:u-envelope} give their ambient convex
	extensions and the stated Lipschitz bounds.
\end{proof}

\subsection{The optimal map}\label{sec:map}

Let $(u,\psi)$ be the optimal pair in Theorem~\ref{thm:dual}, let $\pi$
be an optimal coupling, and assume $\beta\ll\cL^N$. Define
\begin{equation}\label{eq:contact-def}
 \cC=\{(x,\xi)\in\Gamma\times K:u(x)+\psi(\xi)=x\cdot\xi\},
 \qquad
 \cC[E]=\{\xi\in K:\exists x\in E,\ (x,\xi)\in\cC\}.
\end{equation}
For Borel $E\subset\Gamma$, the projection $\cC[E]$ is analytic;
its measure is understood in the completion of $\beta$.
The relation $\cC$ is compact. The nonnegative duality gap has zero
$\pi$-integral, whence
\begin{equation}\label{eq:contact-plan}
 \pi(\cC)=1.
\end{equation}
The envelope formulas give
\begin{equation}\label{eq:subgradients}
 (x,\xi)\in\cC\quad\Longrightarrow\quad
 x\in\partial\psi(\xi),\qquad \xi\in\partial u(x).
\end{equation}
For instance, $\psi(\zeta)\ge x\cdot\zeta-u(x)
=\psi(\xi)+x\cdot(\zeta-\xi)$ for every $\zeta\in\R^N$.
Compactness in \eqref{eq:psi-envelope} also implies
\begin{equation}\label{eq:fibers-contact}
 \qq^{-1}(\xi)\cap\cC\ne\varnothing\qquad(\xi\in K).
\end{equation}

\begin{lemma}\label{lem:transport-map}
Let $D$ be the set of differentiability points of $\psi$ in $K$. There is
a Borel map $T:K\to\Gamma$ with $T=\nabla\psi$ on $D$, and
\begin{enumerate}
\item $\qq^{-1}(\xi)\cap\cC=\{(\nabla\psi(\xi),\xi)\}$ for every $\xi\in D$;
\item $\pi=(T,\id_K)_\#\beta$ and $T_\#\beta=\alpha$;
\item $\beta(\cC[E])=\alpha(E)$ for every Borel $E\subset\Gamma$.
\end{enumerate}
In particular the optimal coupling is unique.
\end{lemma}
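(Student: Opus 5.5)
The plan is to build $T$ from the convexity of $\psi$ and a Borel selection, and then read off (2) and (3) from Lemma~\ref{lem:graph} and \eqref{eq:contact-plan}.

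\emph{Construction of $T$ and item (1).} Fix $\xi\in D$. If $(x,\xi)\in\cC$, then \eqref{eq:subgradients} gives $x\in\partial\psi(\xi)$. Since $\psi$ is differentiable at $\xi$, its subdifferential there is the singleton $\{\nabla\psi(\xi)\}$, so $x=\nabla\psi(\xi)$. By \eqref{eq:fibers-contact} the fibre $\qq^{-1}(\xi)\cap\cC$ is nonempty, which proves (1). In particular $\nabla\psi(\xi)\in\Gamma$ for every $\xi\in D$.

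\emph{Measurability.} The set $D$ is Borel, because the differentiability set of a convex Lipschitz function can be described by countably many conditions on difference quotients over rational increments. On $D$, $\nabla\psi$ is Borel, since it is a pointwise limit of difference quotients of the continuous function $\psi$. Off $D$, I would define $T$ by a measurable selection from the compact-valued relation $\xi\mapsto\qq^{-1}(\xi)\cap\cC$. This relation has closed graph because $\cC$ is compact, so it is upper semicontinuous, and Kuratowski--Ryll-Nardzewski gives a Borel selector. The cheaper alternative is to set $T$ equal to a fixed point $x_0\in\Gamma$ on $K\setminus D$; this suffices because only $\beta$-a.e.\ behaviour matters. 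I expect this measurability bookkeeping to be the only delicate point, and I would take the constant extension.

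\emph{Item (2).} Since $\psi$ is Lipschitz, Rademacher's theorem gives $\cL^N(K\setminus D)=0$, hence $\beta(K\setminus D)=0$ because $\beta\ll\cL^N$. Combining (1) with \eqref{eq:contact-plan}, up to a $\pi$-null set the coupling $\pi$ lives on
\[
\cC\cap\qq^{-1}(D)\subset\{(x,\xi):x=T(\xi)\}.
\]
Here the set $\qq^{-1}(K\setminus D)$ is $\pi$-null because $\qq_\#\pi=\beta$. Therefore $\pi\{x=T(\xi)\}=1$, and Lemma~\ref{lem:graph} yields both assertions of (2). Uniqueness of the optimal coupling follows at once: every optimal $\pi$ satisfies \eqref{eq:contact-plan}, and $T$ depends only on $\psi$, so every optimal coupling equals $(T,\id_K)_\#\beta$.

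\emph{Item (3).} Let $E\subset\Gamma$ be Borel. First, for $\xi\in D$, item (1) shows that $\xi\in\cC[E]$ if and only if $T(\xi)\in E$. Hence
\[
\cC[E]\cap D=T^{-1}(E)\cap D.
\]
Second, $\beta(K\setminus D)=0$, so $\cC[E]$ is $\beta$-measurable in the completion with
\[
\beta(\cC[E])=\beta(T^{-1}(E)).
\]
Finally, by (2) we have $\beta(T^{-1}(E))=T_\#\beta(E)=\alpha(E)$, which proves (3).
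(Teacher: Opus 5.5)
Your proof is correct and follows essentially the same route as the paper. For (1) you use uniqueness of the subgradient at differentiability points together with \eqref{eq:fibers-contact}; for (2) you take the constant extension off $D$, use Rademacher's theorem to concentrate $\pi$ on the graph of $T$, and apply Lemma~\ref{lem:graph}; for (3) you use $\cC[E]\cap D=T^{-1}(E)\cap D$ and the pushforward identity; uniqueness then follows because every optimal coupling lives on the same graph. The only differences are cosmetic. You get $\beta$-measurability of $\cC[E]$ directly from the completion, since it differs from $T^{-1}(E)$ by a subset of the null set $K\setminus D$, whereas the paper notes it is analytic. The Kuratowski--Ryll-Nardzewski alternative is unnecessary, as you yourself observe.
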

\begin{proof}
At a differentiability point, $\partial\psi(\xi)=\{\nabla\psi(\xi)\}$.
Equations \eqref{eq:subgradients}--\eqref{eq:fibers-contact} prove (1),
and show that $\nabla\psi(\xi)\in\Gamma$. The differentiability set and
gradient are Borel; extend the latter to its complement using a fixed
point of $\Gamma$. Rademacher's theorem gives $\beta(K\setminus D)=0$.
The second marginal of $\pi$ therefore gives
$\pi(\Gamma\times(K\setminus D))=0$. Together with
\eqref{eq:contact-plan} and (1), this concentrates $\pi$ on the graph of
$T$, proving (2) by Lemma~\ref{lem:graph}.
For (3), outside $K\setminus D$ one has
$\cC[E]=T^{-1}(E)$; the pushforward identity finishes the proof.
Every optimal coupling is concentrated on this same graph, proving uniqueness.
\end{proof}

\begin{remark}\label{rem:contact-null}
If $\alpha(E)=0$, then $\beta(\cC[E])=0$. If the density of $\beta$ is
strictly positive almost everywhere on $K$, this also implies
$\cL^N(\cC[E])=0$. We will not require strict positivity: the annular
source densities used later vanish on an open set. The weighted
$\beta$-null assertion suffices. None of these assertions is a Lusin
property for the entire normal bundle of an arbitrary function.
\end{remark}

\section{Optimal transport and \texorpdfstring{$C^2$}{C2}-rectifiability}
\subsection{Semiconvexity on \texorpdfstring{$C^2$}{C2} submanifolds}\label{sec:semiconvex}

Suppose $1\le d<N$ and $\Sigma\subset\R^N$ is a $d$-dimensional
$C^2$ submanifold without boundary and not necessarily closed. Let
$T^\perp\Sigma$ be its vector normal bundle and $\nabla^\Sigma$ the
covariant derivative induced by the Euclidean metric. For $x\in\Sigma$,
write $\Pi_x^\Sigma$ for projection onto $T_x\Sigma$, and
$\xi^\perp=(I-\Pi_x^\Sigma)\xi$. The Weingarten map
$\cW_{(a,\omega)}\Sigma:T_a\Sigma\to T_a\Sigma$ is characterized by
\[
 \cW_{(a,\omega)}\Sigma(v)\cdot w
 =-\Der\widetilde\omega(a)(v)\cdot w,
\]
where $\widetilde\omega$ is a local normal field with value $\omega$ at
$a$. In particular,
\begin{equation}\label{eq:affine-hessian}
 \nabla^\Sigma_v(\nabla^\Sigma\cc_\xi)(x)\cdot w
 =\cW_{(x,\xi^\perp)}\Sigma(v)\cdot w.
\end{equation}
Let $\kappa_1(x,\omega)\le\cdots\le\kappa_d(x,\omega)$ be its
eigenvalues. They are continuous and satisfy
$\kappa_i(x,t\omega)=t\kappa_i(x,\omega)$ for $t\ge0$.

\begin{lemma}[Normal-bundle geometry]\label{lem:normal-geometry}
At $(x,\omega)\in T^\perp\Sigma$, choose an orthonormal eigenbasis
$\tau_i$ of $\cW_{(x,\omega)}\Sigma$ and an orthonormal basis
$\upsilon_1,\ldots,\upsilon_{N-d}$ of $T_x^\perp\Sigma$. Then
\[
 \{(\tau_i,-\kappa_i\tau_i):1\le i\le d\}
 \cup\{(0,\upsilon_\alpha):1\le\alpha\le N-d\}
\]
is a basis of $T_{(x,\omega)}(T^\perp\Sigma)$ and
\begin{equation}\label{eq:projection-jacobian}
 J^\Sigma(x,\omega) := J_d\pp(x,\omega)= 
 \prod_{i=1}^d(1+\kappa_i(x,\omega)^2)^{-1/2}.
\end{equation}
If $Z\subset\Sigma$ is Borel and $\cH^d(Z)=0$, then
$\cH^N(T^\perp\Sigma\cap\pp^{-1}(Z))=0$.
\end{lemma}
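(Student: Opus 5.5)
The plan is to parametrize the normal bundle locally and differentiate. Fix $(x,\omega)\in T^\perp\Sigma$ and work in a neighbourhood $U$ of $x$ on which there is a $C^1$ orthonormal normal frame $\nu_1,\ldots,\nu_{N-d}$; this needs $\Sigma$ of class $C^2$, so that the Gauss map is $C^1$. The map
\[
\Phi:U\times\R^{N-d}\to T^\perp\Sigma,\qquad \Phi(y,t)=\Bigl(y,\textstyle\sum_\alpha t_\alpha\nu_\alpha(y)\Bigr),
\]
is then a $C^1$ diffeomorphism onto an open subset. Hence $T^\perp\Sigma$ is an $N$-dimensional $C^1$ submanifold of $\R^N\times\R^N$, and its tangent space at $(x,\omega)$ is the image of $\Der\Phi$. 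Take $\widetilde\omega=\sum_\alpha t_\alpha\nu_\alpha$ with $t$ fixed, so that $\widetilde\omega(x)=\omega$. Differentiating in $y$ along $\tau_i$ gives the vector $(\tau_i,\Der\widetilde\omega(x)\tau_i)$, and differentiating in $t$ gives $(0,\nu_\alpha(x))$.

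The next step is to decompose $\Der\widetilde\omega(x)\tau_i$. By the definition of the Weingarten map, its tangential part is $-\cW_{(x,\omega)}\Sigma(\tau_i)=-\kappa_i\tau_i$; its normal part $n_i$ lies in $T_x^\perp\Sigma$. Subtracting a combination of the $(0,\nu_\alpha)$ removes $n_i$, which yields $(\tau_i,-\kappa_i\tau_i)\in T_{(x,\omega)}(T^\perp\Sigma)$. The $(0,\upsilon_\alpha)$ are also tangent, since the fibre direction spans all of $\{0\}\times T_x^\perp\Sigma$. The $N$ listed vectors are linearly independent: the first $d$ have first components $\tau_i$, which are independent, and the last $N-d$ are independent vectors in $\{0\}\times T_x^\perp\Sigma$. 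They therefore form a basis of the $N$-dimensional tangent space.

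To compute the Jacobian, note that the listed vectors are mutually orthogonal in $\R^{2N}$. Indeed $(\tau_i,-\kappa_i\tau_i)\cdot(\tau_j,-\kappa_j\tau_j)=\delta_{ij}(1+\kappa_i^2)$, the products $(\tau_i,-\kappa_i\tau_i)\cdot(0,\upsilon_\alpha)=-\kappa_i\,\tau_i\cdot\upsilon_\alpha=0$, and the $(0,\upsilon_\alpha)$ are orthonormal. After normalizing, $\Der\pp$ sends $(\tau_i,-\kappa_i\tau_i)/(1+\kappa_i^2)^{1/2}$ to $\tau_i/(1+\kappa_i^2)^{1/2}$ and kills the $(0,\upsilon_\alpha)$. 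Its image is $T_x\Sigma$, so $\Der\pp$ has rank $d$. The $d$-dimensional Jacobian equals the product of the singular values on the orthogonal complement of the kernel, and this gives \eqref{eq:projection-jacobian}.

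For the null-set claim, apply the coarea formula to the Lipschitz map $\pp|_{T^\perp\Sigma}$ from the $N$-rectifiable set $T^\perp\Sigma$ to $\R^N$, whose image is $d$-dimensional:
\[
\int_{T^\perp\Sigma\cap\pp^{-1}(Z)}J^\Sigma\,\dd\cH^N=\int_Z\cH^{N-d}\bigl(\pp^{-1}(y)\cap T^\perp\Sigma\bigr)\dd\cH^d(y)=0 .
\]
Since $J^\Sigma>0$ everywhere, $\cH^N(T^\perp\Sigma\cap\pp^{-1}(Z))=0$. The fibres are infinite, but this causes no difficulty, because the coarea formula holds for nonnegative integrands with values in $[0,\infty]$. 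Alternatively, one can localize with the product structure from $\Phi$ and use Fubini on the bounded sets $\Phi((Z\cap U)\times B_k^{N-d})$, then take the union over $k$.

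The main point of care is regularity. Because $\Sigma$ is only $C^2$, the bundle $T^\perp\Sigma$ is only $C^1$. This is still enough for tangent spaces and for the area and coarea formulas, and the differentiability of $\widetilde\omega$ is exactly what the definition of $\cW$ uses.
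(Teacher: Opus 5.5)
Your proposal is correct and follows the same approach as the paper. The paper's proof is the single sentence that the lemma "follows from elementary differential-geometric considerations", and your argument supplies exactly those considerations: the $C^1$ normal-frame parametrization, the Weingarten decomposition of $\Der\widetilde\omega(x)\tau_i$, the orthogonality of the listed basis (giving singular values $(1+\kappa_i^2)^{-1/2}$ for $\Der\pp$), and the coarea or localized Fubini argument with $J^\Sigma>0$.
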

\begin{proof}
This follows from elementary differential-geometric considerations.
\end{proof}

Let now $u:\R^N\to\R$ be convex and Lipschitz. Its restriction to
$\Sigma$ is locally semiconvex for the induced Riemannian structure and its tangential gradient $ \nabla^\Sigma u $  belongs to $BV_{\rm loc}(\Sigma, \R^N)$ (cf.\ \cite{CannarsaSinestrari}). We denote by $ D $ the set of pointwise differentiability points of $ u $ relative to $ \Sigma $. It follows from \cite[3.83]{AFP} and \cite[3.1.8]{Fed69} that $ \nabla^\Sigma u $  is
$(\cH^d \llcorner \Sigma , d)$-approximately differentiable at $\cH^d$ almost every point of $ \Sigma $ and there exist pairwise disjoint Borel sets $P_1,P_2,\ldots\subset D$
such that
\begin{equation}\label{eq:P}
	P=\bigcup_{j=1}^\infty P_j,\qquad
	\cH^d(\Sigma\setminus P)=0,
	\qquad \Lip(\nabla^\Sigma u|_{P_j})<\infty.
\end{equation}
At such a
point we define the \emph{approximate covariant Hessian} of $ u $ as
\begin{equation}\label{eq:approx-hessian}
 \ap\nabla_\Sigma^2u(x)
 :=\Pi_x^\Sigma\circ\ap\Der (\nabla^\Sigma u)(x)|_{T_x\Sigma}:T_x\Sigma\to T_x\Sigma.
\end{equation}
It is a  symmetric  operator.

\begin{lemma}[Second-order expansion]\label{lem:second-order-expansion}
	There is a Borel set $G\subset P$, with $\cH^d(\Sigma\setminus G)=0$,
	such that, at every $a\in G$, $u|_\Sigma$ is differentiable and
	$\nabla^\Sigma u$ is approximately differentiable relative to $\Sigma$.
	For every $ a \in G $ and for every tangent-plane graph-representation
	$F(p)=a+p+f(p)$ of $\Sigma$ near $a$, where
	$f:W\subset T_a\Sigma\to T_a^\perp\Sigma$ is $C^2$,
	$0\in W$, and $f(0)=\Der f(0)=0$, one has
	\begin{equation}\label{eq:second-order-expansion}
		u(F(p))=u(a)+\nabla^\Sigma u(a)\cdot p
		+\tfrac12\ap\nabla^2_\Sigma u(a)(p)\cdot p+o(|p|^2).
	\end{equation}
	Moreover, there is a $C^2$ function $\widehat u_a$ on a relatively open
	neighborhood of $a$ in $\Sigma$ such that
	\[
	\widehat u_a(a)=u(a),\qquad
	\nabla^\Sigma\widehat u_a(a)=\nabla^\Sigma u(a),\qquad
	\nabla_\Sigma^2\widehat u_a(a)=\ap\nabla_\Sigma^2u(a).
	\]
	For every fixed $c\ge1$ there is a nonnegative, nondecreasing function
	$\omega_{a,c}$ such that $ \omega_{a,c}(r) \to 0 $ as $r\downarrow0$, and there is $ r_{a,c} > 0 $ such that
	\begin{equation}\label{eq:second-order-expansion tilde u}
		\sup_{q\in\Sigma\cap B_{cr}^N(a)}
	|u(q)-\widehat u_a(q)|\le\omega_{a,c}(r)r^2
	\qquad\text{for all $ 0 < r < r_{a,c} $\,.}
	\end{equation}
\end{lemma}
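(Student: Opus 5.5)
Everything is local, so I will fix a chart. I pull back by a tangent-plane graph parametrization $F(p)=a+p+f(p)$. Then $v:=u\circ F$ is a Lipschitz function on an open set $W\subset\R^d$. It is semiconvex, since $u$ is convex and $F$ is $C^2$: for $\lambda$ large, $v+\lambda|p|^2$ is convex on small balls. Alexandrov's theorem applies to semiconvex functions. It gives, at $\cL^d$-a.e.\ $p$, differentiability of $v$ together with a genuine second-order Taylor expansion
\[
v(p+h)=v(p)+\nabla v(p)\cdot h+\tfrac12 A(p)h\cdot h+o(|h|^2),
\]
where $A(p)$ is the absolutely continuous part of $\Der^2v$. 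At a.e.\ point this coincides with the approximate derivative of $\nabla v$, by \cite[3.83]{AFP}, because $\nabla v\in BV_{\rm loc}$.

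The first task is to translate Euclidean data in the chart into covariant data on $\Sigma$. On $P$ the tangential gradient satisfies $\nabla^\Sigma u(F(p))=(\Der F(p)^{T})^{-1}$-type expressions of $\nabla v(p)$. Differentiating approximately, by the chain rule for approximate differentials composed with $C^1$ maps, the second-order part becomes $\ap\nabla^2_\Sigma u(F(p))$ plus Christoffel-type terms involving $\Der^2 f$ and $\nabla v$. At the base point $a$ itself one has $\Der f(0)=0$. There the Christoffel terms disappear after projecting by $\Pi_a^\Sigma$, as in \eqref{eq:approx-hessian}, so $A(0)=\ap\nabla^2_\Sigma u(a)$.

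Next I define $G$. It is the set of $a\in P$ at which $u|_\Sigma$ is differentiable, $\nabla^\Sigma u$ is approximately differentiable, and the pulled-back function has an Alexandrov expansion at $0$ in some chart. Two points need care here:
\begin{itemize}
\item The Alexandrov property at a point is independent of the chart, since a $C^2$ change of variables preserves second-order expansions.
\item Covering $\Sigma$ by countably many charts, Lebesgue-null sets correspond to $\cH^d$-null sets. Hence $\cH^d(\Sigma\setminus G)=0$.
\end{itemize}
Borel measurability of $G$ requires expressing the defining conditions through countable limits of continuous quantities. If this is awkward, I will instead pass to a Borel subset of full measure. Equation \eqref{eq:second-order-expansion} then holds for every graph representation $F$ at $a$, by chart independence together with the identification of $A(0)$ above.

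For $\widehat u_a$, I work in the graph chart at $a$ and set
\[
\widehat u_a(F(p))=u(a)+\nabla^\Sigma u(a)\cdot p+\tfrac12\ap\nabla^2_\Sigma u(a)(p)\cdot p .
\]
This is $C^2$ on $F(W)$, because $F^{-1}$ is the restriction of the $C^2$ orthogonal projection. Its covariant gradient at $a$ is $\nabla^\Sigma u(a)$, since $\Der F(0)$ is the inclusion. Its covariant Hessian at $a$ is the chart Hessian, which is $\ap\nabla^2_\Sigma u(a)$, because the Christoffel contribution at $0$ is normal and is removed by $\Pi_a^\Sigma$.

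Finally, by \eqref{eq:second-order-expansion}, $|u(F(p))-\widehat u_a(F(p))|=o(|p|^2)$. For $q=F(p)\in\Sigma\cap B^N_{cr}(a)$ we have $|p|\le|q-a|<cr$. Setting
\[
\omega_{a,c}(r)=c^2\sup_{0<|p|<cr}\frac{|u(F(p))-\widehat u_a(F(p))|}{|p|^2}
\]
gives a nondecreasing function tending to $0$. The radius $r_{a,c}$ is chosen so that $\Sigma\cap B^N_{cr}(a)\subset F(W)$; this is possible since $\Sigma$ is embedded. The main obstacle is the identification of the Alexandrov Hessian with the approximate covariant Hessian \eqref{eq:approx-hessian}, including the vanishing of the curvature correction at $a$; the remaining steps are routine bookkeeping.
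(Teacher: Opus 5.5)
Your outline is sound, but the step you flag as the main obstacle uses a different mechanism from the paper's, and as written that step has a hole.

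\textbf{The paper's route.} The paper uses the gradient estimate that comes with Alexandrov's theorem for semiconvex functions (Villani, Theorem~14.25(i$'$)). Fix a chart $\phi$ of a countable atlas. At every Alexandrov point $b$, with polynomial $Q_b$, one also has $\Der v(t)-\Der Q_b(t)=o(|t-b|)$ through differentiability points. Set $\widehat u_a=Q_b\circ\phi^{-1}$. Then $\nabla^\Sigma u-\nabla^\Sigma\widehat u_a=o(|\cdot-a|)$ on a set of full measure. Comparing approximate differentials gives $\nabla^2_\Sigma\widehat u_a(a)=\ap\nabla^2_\Sigma u(a)$ at \emph{every} point of $G$, not just almost every point. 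The expansion \eqref{eq:second-order-expansion} in the tangent-plane graph is then Taylor's formula for the $C^2$ function $\widehat u_a$, whose Christoffel symbols vanish at the origin of that graph.

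\textbf{Where your argument has a hole.} You identify the Alexandrov Hessian with $\ap\Der(\nabla v)$ because both equal the density of the absolutely continuous part of $\Der^2 v$. That holds only almost everywhere in one fixed chart. You then use it at the single point $p=0$ of the graph chart centred at $a$, a chart that changes with $a$. Your definition of $G$ does not contain this coincidence, so at a given $a\in G$ nothing you wrote forces $A(0)=\ap\Der(\nabla v)(0)$, and the conclusion $A(0)=\ap\nabla^2_\Sigma u(a)$ is not justified.

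\textbf{How to repair it.} Either of the following works:
\begin{enumerate}
\item Invoke the gradient estimate above. It makes the coincidence automatic at every Alexandrov point of a semiconvex function.
\item Define $G$ through a fixed countable atlas, with the coincidence built in, and transport it. Under a $C^2$ change of variables $\tau$, both the Alexandrov Hessian and the approximate differential of the gradient transform by the same law $(\Der\tau)^{T}A\,\Der\tau+\nabla v\cdot\Der^2\tau$. Hence the coincidence passes to the tangent-plane chart at $a$, and your computation at $p=0$, where the normal term from $\Der^2 f(0)$ is killed by $\Pi^\Sigma_a$, then goes through.
\end{enumerate}
The gradient-estimate route buys a pointwise statement with no bookkeeping. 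Yours relies on more standard facts (the measure-theoretic Alexandrov theorem and Calder\'on--Zygmund) but needs the transport step.

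With that repair, your ordering is a valid and slightly more explicit alternative to the paper's $Q_b\circ\phi^{-1}$. You prove \eqref{eq:second-order-expansion} first, then take $\widehat u_a$ to be the restriction to $\Sigma$ of the ambient quadratic $x\mapsto u(a)+\nabla^\Sigma u(a)\cdot\Pi^\Sigma_a(x-a)+\frac12\ap\nabla^2_\Sigma u(a)\Pi^\Sigma_a(x-a)\cdot\Pi^\Sigma_a(x-a)$. Your $\omega_{a,c}$ is the paper's choice in a different normalization.
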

\begin{proof}This result follows from well known facts about semiconvex functions and we provide a sketch of the proof.
	
Put $A_a=\ap\nabla_\Sigma^2u(a)$. In a countable $C^2$ atlas,
semiconvexity and the second-order differentiability theorem, together
with its gradient estimate
\cite[Theorems 14.1 and 14.25(i$'$)]{Villani}, give quadratic polynomials
$Q_b$ satisfying
\[
 v(t)-Q_b(t)=o(|t-b|^2),\qquad
 Dv(t)-DQ_b(t)=o(|t-b|),\qquad v=u\circ\phi,
\]
where the second limit is through differentiability points; see also
\cite[Lemma 5]{BrendleEichmairTransport}. Choose a Borel full-measure
set $G\subset P$ where these conclusions hold.
For $a=\phi(b)\in G$, set $\widehat u_a=Q_b\circ\phi^{-1}$.
The first estimate gives $u-\widehat u_a=o(|\,\cdot-a|^2)$ on $\Sigma$.
The second, after changing coordinates, gives
$\nabla^\Sigma u-\nabla^\Sigma\widehat u_a=o(|\,\cdot-a|)$ through $D$;
comparison of approximate differentials therefore yields
$\nabla_\Sigma^2\widehat u_a(a)=A_a$.
This also gives the asserted value and gradient at $a$.
In any tangent-plane graph centered at $a$, the Christoffel symbols
vanish at the origin. Taylor expansion of $\widehat u_a$ in that graph
therefore gives the asserted second-order expansion, with the same $G$.

Choose $\delta_a>0$ so that $\Sigma\cap B_{2\delta_a}^N(a)$ lies in
the domain of $\widehat u_a$ and, for $0<t\le\delta_a$, the function
\[
 \varepsilon_a(t)=
 \sup_{\substack{q\in\Sigma\\0<|q-a|\le t}}
 \frac{|u(q)-\widehat u_a(q)|}{|q-a|^2}
\]
is finite. It is nondecreasing and tends to zero at zero. Set
\[
 r_{a,c}=\delta_a/c,\qquad
 \omega_{a,c}(r)=c^2\varepsilon_a(\min\{cr,\delta_a\}).
\]
For $0<r<r_{a,c}$ and $q\in\Sigma\cap B_{cr}^N(a)$,
$|u(q)-\widehat u_a(q)|\le\varepsilon_a(cr)c^2r^2$.
This proves \eqref{eq:second-order-expansion tilde u};
$\widehat u_a$ and $\delta_a$ are fixed before choosing $c$ and $r$.
\end{proof}

Fix now a Borel map $  \Phi:T^\perp\Sigma\to\R^N $ such that
\begin{equation}\label{eq:Phi}
\Phi(x,\omega)=\nabla^\Sigma u(x)+\omega  \quad \text{whenever $ u $ is differentiable at  $ x $ relative to $ \Sigma $\,.}
\end{equation}
Notice that $ \cH^N(\{(x, \omega)\in T^\perp\Sigma: \text{$ u $ is not differentiable at $ x $ relative to $ \Sigma $} \}) =0 $. Moreover, we observe that at $\cH^N$ almost every point $(x,\omega) \in T^\perp \Sigma $  the map $\Phi$ is $(\cH^N \llcorner T^\perp \Sigma, N)$-approximately differentiable and
\[
 \ap\Der\Phi(x,\omega)(\tau,\upsilon)
 =\ap\Der (\nabla^\Sigma u)(x)(\tau)+\upsilon \quad \text{for $ (\tau, \upsilon) \in T_{(x, \omega)}T^\perp \Sigma $\,.}
\]

\begin{lemma}[Normal-fiber area formula]\label{lem:coarea}
Let $E\subset T^\perp\Sigma$ be Borel and $F\ge0$ be Borel on
$T^\perp\Sigma$. Then
\begin{align}
 &\int_{\Sigma}\int_{\{\omega:(x,\omega)\in E\}}
 \big|\det[\ap\nabla^2_\Sigma u(x)-\cW_{(x,\omega)}\Sigma]\big|
 F(x,\omega)\dd\cH^{N-d}(\omega)\dd\cH^d(x)\notag\\
 &\hspace{12mm}=
 \int_{\R^N}\sum_{(x,\omega)\in\Phi^{-1}(\xi)\cap E\cap\pp^{-1}(P)}
 F(x,\omega)\dd\cL^N(\xi).
 \label{eq:fiber-area}
\end{align}
The left integrand is defined to be zero at base points outside $P$.
Both sides are nonnegative extended integrals.
\end{lemma}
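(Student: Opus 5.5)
This is the area formula for the map $\Phi$ restricted to $\pp^{-1}(P)\cap E$, expressed in the base–fiber coordinates of $T^\perp\Sigma$. The plan is to split $\pp^{-1}(P)$ into the pieces $\pp^{-1}(P_j)$ from \eqref{eq:P}. On each piece $\Phi$ is locally Lipschitz, so Federer's area formula \cite[3.2.3, 3.2.5]{Fed69} applies: for Lipschitz maps between $N$-rectifiable sets it holds with the approximate Jacobian. Then rewrite $\int_{T^\perp\Sigma}\cdots\dd\cH^N$ as an iterated integral over $\Sigma$ and the fibers $T_x^\perp\Sigma$ via the coarea formula for $\pp$, whose $d$-Jacobian is given by \eqref{eq:projection-jacobian}.

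\textbf{Step 1 (Lipschitz pieces).} On $\pp^{-1}(P_j)\cap\{|\omega|\le k\}$ the map $(x,\omega)\mapsto\nabla^\Sigma u(x)+\omega$ is Lipschitz, because $\nabla^\Sigma u|_{P_j}$ is Lipschitz and $\pp$ is Lipschitz. By Kirszbraun, extend it to a Lipschitz map on $T^\perp\Sigma$. Its approximate differential agrees $\cH^N$-a.e. with the formula for $\ap\Der\Phi$ stated above. The sets are pairwise disjoint in $j$ and exhaust $\pp^{-1}(P)$ as $k\to\infty$. The area formula for each piece, summed by monotone convergence, gives
\[
\int_{E\cap\pp^{-1}(P)}J_N\Phi\,F\dd\cH^N=\int_{\R^N}\sum_{(x,\omega)\in\Phi^{-1}(\xi)\cap E\cap\pp^{-1}(P)}F\dd\cL^N(\xi).
\]

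\textbf{Step 2 (Jacobian computation).} Fix a point where Lemma~\ref{lem:normal-geometry} applies and $\nabla^\Sigma u$ is approximately differentiable at $x$. Use the basis $\{(\tau_i,-\kappa_i\tau_i)\}\cup\{(0,\upsilon_\alpha)\}$. The image of $(0,\upsilon_\alpha)$ is $\upsilon_\alpha$. The image of $(\tau_i,-\kappa_i\tau_i)$ is $\ap\Der(\nabla^\Sigma u)(x)\tau_i-\kappa_i\tau_i$. Here $-\kappa_i\tau_i$ is the derivative of the normal component, and it must be read through the tangential/normal decomposition of the bundle's tangent space.

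The normal part of $\ap\Der(\nabla^\Sigma u)\tau_i$ is killed modulo the span of the $\upsilon_\alpha$ in the determinant, since those columns are the identity in the normal directions. The determinant in the ambient orthonormal basis $(\tau,\upsilon)$ is therefore $\det[\ap\nabla^2_\Sigma u(x)-\cW_{(x,\omega)}\Sigma]$ by \eqref{eq:approx-hessian}. The basis has volume $\prod_i(1+\kappa_i^2)^{1/2}$, so
\[
J_N\Phi=\big|\det[\ap\nabla^2_\Sigma u-\cW]\big|\prod_i(1+\kappa_i^2)^{-1/2}=\big|\det[\cdots]\big|\,J^\Sigma .
\]
One care point: the tangent vector's $\omega$-component $-\kappa_i\tau_i$ is tangent to $\Sigma$. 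This is the standard identification $T_{(x,\omega)}T^\perp\Sigma\subset\R^N\times\R^N$, in which the normal-field derivative has a tangential component $-\cW\tau_i$. This sign convention must match \eqref{eq:affine-hessian}.

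\textbf{Step 3 (disintegration).} Apply the coarea formula to $\pp:T^\perp\Sigma\to\Sigma$. Its fibers are the affine planes $\{x\}\times T^\perp_x\Sigma$, and $J_d\pp=J^\Sigma$. For nonnegative $g$ this gives
\[
\int g\,J^\Sigma\dd\cH^N=\int_\Sigma\int_{T_x^\perp\Sigma}g\dd\cH^{N-d}\dd\cH^d.
\]
Take $g=|\det[\cdots]|\,F\,1_{E\cap\pp^{-1}(P)}$. By Step 2 the left side of this coarea identity equals the left side of the Step 1 identity, which yields \eqref{eq:fiber-area}. Exceptional null sets in $x$ lift to $\cH^N$-null sets by the last assertion of Lemma~\ref{lem:normal-geometry}.

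The main obstacle is Step 2: justifying that the approximate differential of the Lipschitz extension equals the stated formula a.e. on $T^\perp\Sigma$, which uses that approximate differentiability in $x$ lifts to the product structure, and getting the Weingarten sign right in the determinant.
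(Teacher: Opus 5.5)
Your proposal is correct and follows the paper's proof. Both apply the area formula on the disjoint Lipschitz pieces $\pp^{-1}(P_j)$, compute $\ap J_N\Phi=|\det(A-W)|\,J^\Sigma$ in the basis $(\tau_i,-\kappa_i\tau_i)$, $(0,\upsilon_\alpha)$, and finish with coarea for $\pp$; your block-triangular determinant is the paper's wedge-with-$\zeta$ argument. The truncation $|\omega|\le k$ is harmless but unnecessary, since $\Phi=\nabla^\Sigma u\circ\pp+\qq$ is already Lipschitz on all of $\pp^{-1}(P_j)$.
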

\begin{proof}
On each $T^\perp\Sigma\cap\pp^{-1}(P_j)$, the map $\Phi$ is Lipschitz.
At almost every $(x,\omega)$ there, put
$A=\ap\nabla_\Sigma^2u(x)$, $W=\cW_{(x,\omega)}\Sigma$, and
$ Q=\ap\Der\Phi(x,\omega)$. Use the orthogonal basis
$h_i=(\tau_i,-\kappa_i\tau_i)$, $v_\alpha=(0,\upsilon_\alpha)$
from Lemma~\ref{lem:normal-geometry}, and put $q=N-d$.
The differential formula preceding the lemma gives
\[
 Q h_i=(A-W)\tau_i+n_i,\quad n_i\in T_x^\perp\Sigma,
 \qquad Q v_\alpha=\upsilon_\alpha.
\]
Let $\zeta=\upsilon_1\wedge\cdots\wedge\upsilon_q$.
Since $n_i\wedge\zeta=0$, multilinearity and alternation give
\[
 (Q h_1\wedge\cdots\wedge Q h_d)\wedge\zeta
 =\det(A-W)(\tau_1\wedge\cdots\wedge\tau_d)\wedge\zeta.
\]
The norm of this $N$-vector is $|\det(A-W)|$, whereas the
$N$-volume of the domain basis is
$\prod_{i=1}^d(1+\kappa_i^2)^{1/2}=1/J^\Sigma(x,\omega)$.
Consequently,
\[
 \ap J_N\Phi(x,\omega)
 =|\det(A-W)|J^\Sigma(x,\omega).
\]
Apply the weighted area formula on the disjoint Lipschitz restrictions
and sum, then apply coarea to $\pp$, whose Jacobian is $J^\Sigma$
and whose fibers are isometric to $T_x^\perp\Sigma$;
see \cite[3.2.3 and 3.2.22]{Fed69}. This proves the asserted identity.
The excluded set over $\Sigma\setminus P$ is null by
Lemma~\ref{lem:normal-geometry}; its image is not used.
\end{proof}

\subsection{\texorpdfstring{$C^2$}{C2}-rectifiable sets}

Let $1\le d<N$, let $\Gamma\subset\R^N$ be compact, and let $S\subset\Gamma$
be Borel and carried up to $\cH^d$-null sets by countably many
$d$-dimensional $C^2$ submanifolds $\Sigma_j$. Let $\theta:S\to(0,\infty)$
be Borel, finite almost everywhere, with
\[
 0<M:=\int_S\theta\dd\cH^d<\infty,
 \qquad \mu=\theta\cH^d\restr S,
 \qquad \alpha=\mu/M, \qquad \Gamma=\spt\mu\,.
\]

Let $\rho:\R^N\to[0,\infty)$ be Borel, vanish outside $B^N$, and satisfy
$\int\rho\dd\cL^N=1$. Set $\beta=\rho\cL^N$ and choose the optimal pair
$(u,\psi)$ of Theorem~\ref{thm:dual}. Choose pairwise disjoint Borel
sets $G_j\subset S\cap\Sigma_j$ from the full-measure sets of
Lemma~\ref{lem:second-order-expansion}, so that
$G=\bigcup_jG_j$ has full $\mu$-measure and the tangent plane of $\mu$
agrees with $T_x\Sigma_j$ at every $x\in G_j$.
Write $\Pi_x^S$ for the orthogonal projection onto this plane. For
$x\in G_j$ put
\begin{align*}
 T_xS&=T_x\Sigma_j,&
 \nabla^S u(x)&=\nabla^{\Sigma_j}u(x),&
 \ap\nabla_S^2u(x)&=\ap\nabla_{\Sigma_j}^2u(x),\\
 \cW_{(x,\omega)}S&=\cW_{(x,\omega)}\Sigma_j,&
 H_S(x,\omega)&=\trace\cW_{(x,\omega)}\Sigma_j,&
 \Phi(x,\omega)&=\nabla^S u(x)+\omega.
\end{align*}
Define
\begin{equation}\label{eq:active-normal}
 \cY_x=\{\omega\in T_x^\perp S:(x,\Phi(x,\omega))\in\cC\}
 \quad(x\in G),
 \qquad
 \cY=\{(x,\omega):x\in G,\ \omega\in\cY_x\}.
\end{equation}
All these sets are Borel on the countably many bundle charts.

\begin{lemma}\label{lem:contact-relation}
If $\xi\in B^N$ is a differentiability point of $\psi$ and
$\Phi^{-1}(\xi)\cap\cY\ne\varnothing$, then
\[
 \Phi^{-1}(\xi)\cap\cY
 =\{(\nabla\psi(\xi),\xi-\nabla^S u(\nabla\psi(\xi)))\}.
\]
Moreover, for every $(x,\omega)\in\cY$,
\begin{equation}\label{eq:contact-positive}
 \ap\nabla_S^2u(x)-\cW_{(x,\omega)}S\ge0,
 \qquad
 |\nabla^S u(x)|^2+|\omega|^2\le1.
\end{equation}
For every $x\in G$, every contact slope $\xi$ at $x$ has
$\xi=\nabla^S u(x)+\xi^\perp$, with $(x,\xi^\perp)\in\cY$.
\end{lemma}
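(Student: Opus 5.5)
The lemma has three parts: uniqueness on fibers of $\Phi$, the two inequalities in \eqref{eq:contact-positive}, and the decomposition of contact slopes. I would prove them in reverse order, because the decomposition underlies the rest.

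\emph{Decomposition of contact slopes.} Fix $x\in G$ and a contact slope $\xi$, meaning $(x,\xi)\in\cC$. By \eqref{eq:subgradients}, $\xi\in\partial u(x)$. Since $u|_\Sigma$ is differentiable at $x$, the following holds for $v\in T_x\Sigma$ and a $C^1$ curve $\gamma$ in $\Sigma$ with $\gamma(0)=x$, $\gamma'(0)=v$: the subgradient inequality $u(\gamma(t))\ge u(x)+\xi\cdot(\gamma(t)-x)$ gives a one-sided inequality for $t>0$ and the reverse one for $t<0$. Hence $\nabla^S u(x)\cdot v=\xi\cdot v$, so $\Pi_x^S\xi=\nabla^S u(x)$. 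With $\omega=\xi^\perp$ this gives $\Phi(x,\omega)=\nabla^Su(x)+\omega=\xi$. Therefore $(x,\Phi(x,\omega))\in\cC$, that is, $\omega\in\cY_x$. The inequality $|\nabla^S u(x)|^2+|\omega|^2=|\xi|^2\le1$ follows because $\xi\in K=B^N$ and the decomposition is orthogonal.

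\emph{Fiber uniqueness.} Let $(x,\omega)\in\Phi^{-1}(\xi)\cap\cY$. Then $(x,\xi)\in\cC$. Lemma~\ref{lem:transport-map}(1) states that $\qq^{-1}(\xi)\cap\cC=\{(\nabla\psi(\xi),\xi)\}$ at differentiability points of $\psi$, so $x=\nabla\psi(\xi)$. Then $\omega=\xi-\nabla^Su(x)$ is forced, because $\Phi(x,\omega)=\xi$.

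\emph{Hessian inequality.} This is the main step, and I expect it to be the main obstacle. Set $\xi=\Phi(x,\omega)$, so that $(x,\xi)\in\cC$ and $\xi\in\partial u(x)$. The function $g=u-\cc_\xi$ restricted to $\Sigma$ then attains a global minimum over $\Sigma$ at $x$, since $u(z)\ge u(x)+\xi\cdot(z-x)$ for all $z$. I would use a tangent-plane graph $F(p)=x+p+f(p)$ and the expansion \eqref{eq:second-order-expansion} for $u$. For $\cc_\xi$ I would use the exact $C^2$ expansion, whose Hessian at $x$ is given by \eqref{eq:affine-hessian}:
\[
 \xi\cdot F(p)=\xi\cdot x+\xi\cdot p+\tfrac12\cW_{(x,\xi^\perp)}\Sigma(p)\cdot p+o(|p|^2).
\]
Note that $\xi\cdot p=\nabla^Su(x)\cdot p$ for tangent $p$, and $\xi^\perp=\omega$. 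Subtracting the two expansions gives
\[
 0\le g(F(p))-g(x)=\tfrac12\bigl(\ap\nabla^2_S u(x)-\cW_{(x,\omega)}S\bigr)(p)\cdot p+o(|p|^2).
\]
Taking $p=tv$ and letting $t\downarrow0$ yields positive semidefiniteness.

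The delicate points in this step are the following. First, the second-order expansion must hold pointwise and not merely approximately at $x\in G$; Lemma~\ref{lem:second-order-expansion} provides exactly this, which is why $\cY$ is only defined over $G$. Second, the sign convention in \eqref{eq:affine-hessian} must be checked to match the stated inequality, i.e. that the Hessian of $\cc_\xi$ is $+\cW_{(x,\xi^\perp)}$. I would verify the sign directly by differentiating $\nabla^\Sigma\cc_\xi=\xi-\xi^\perp$ along $\Sigma$ and using the definition of $\cW$.
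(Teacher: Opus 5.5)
Your proposal is correct and follows essentially the paper's proof: fiber uniqueness comes from Lemma~\ref{lem:transport-map}(1), and positivity comes from expanding $u(F(z))-u(x)-\xi\cdot(F(z)-x)\ge0$ in tangent-plane graph coordinates via Lemma~\ref{lem:second-order-expansion}. The only cosmetic difference is that you eliminate the linear term $(\nabla^S u(x)-\Pi_x^S\xi)\cdot z$ by a separate first-order argument along curves, whereas the paper reads it off the same second-order expansion by testing both signs of $z$.
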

\begin{proof}
The unique-contact assertion follows from Lemma~\ref{lem:transport-map}(1);
once the base point is fixed, the normal vector is uniquely
$\xi-\nabla^S u(x)$. For the remaining assertions use
$\xi\in\partial u(x)$ from \eqref{eq:subgradients}. At $x\in G_j$, write
$\Sigma_j$ in tangent-plane graph coordinates. Lemma
\ref{lem:second-order-expansion} and the $C^2$ expansion of the graph give
\begin{align*}
 0&\le u(F(z))-u(x)-\xi\cdot(F(z)-x)\\
 &=\bigl(\nabla^S u(x)-\Pi_x^S\xi\bigr)\cdot z
 +\frac12\bigl(\ap\nabla_S^2u(x)
       -\cW_{(x,\xi^\perp)}S\bigr)[z,z]+o(|z|^2).
\end{align*}
Testing both signs of $z$ first cancels the linear term and then proves
positivity of the quadratic term. The length bound follows from
$|\xi|\le1$ and orthogonality.
\end{proof}

\begin{lemma}\label{lem:fiber-balance}
For $\mu$ almost every $x\in S$,
\begin{equation}\label{eq:fiber-balance}
 \frac{\theta(x)}{M}
 =\int_{\cY_x}
 \det[\ap\nabla_S^2u(x)-\cW_{(x,\omega)}S]
 \rho(\Phi(x,\omega))\dd\cH^{N-d}(\omega).
\end{equation}
\end{lemma}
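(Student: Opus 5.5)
We test both sides of \eqref{eq:fiber-balance} against an arbitrary Borel set $E\subset S$: it suffices to show
\[
 \frac{\mu(E)}{M}=\int_E\int_{\cY_x}\det[\ap\nabla_S^2u(x)-\cW_{(x,\omega)}S]\,\rho(\Phi(x,\omega))\dd\cH^{N-d}(\omega)\dd\cH^d(x)
\]
for all Borel $E$, since then the two densities with respect to $\cH^d\restr S$ agree almost everywhere. Because $\mu$ has density $\theta$, this gives the claim for $\mu$ almost every $x$. By countable additivity we may assume $E\subset G_j$ for a single $j$, and we work on $\Sigma=\Sigma_j$ with the sets $P$ of \eqref{eq:P}.

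First, apply Lemma~\ref{lem:coarea} on $\Sigma_j$ with the Borel set $\cY\cap\pp^{-1}(E)$ and weight $F=\rho\circ\Phi$. By \eqref{eq:contact-positive} the determinant is nonnegative on $\cY$, so the absolute value can be dropped. The left side becomes the right side of the displayed identity, and the right side becomes
\[
 \int_{\R^N}\rho(\xi)\,\#\bigl(\Phi^{-1}(\xi)\cap\cY\cap\pp^{-1}(E)\bigr)\dd\cL^N(\xi).
\]
(The restriction to $\pp^{-1}(P)$ is harmless, since $G\subset P$.)

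Second, we identify the counting function. Because $\rho$ vanishes outside $B^N$ and $\psi$ is differentiable $\cL^N$-almost everywhere, only $\xi\in D\cap B^N$ matter. For such $\xi$, Lemma~\ref{lem:contact-relation} shows that the count is at most $1$. It equals $1$ exactly when $\nabla\psi(\xi)\in E$. Indeed, if $x=\nabla\psi(\xi)\in E\subset G$, then $(x,\xi)\in\cC$ by Lemma~\ref{lem:transport-map}(1), so $\xi$ is a contact slope at $x$. The last assertion of Lemma~\ref{lem:contact-relation} then produces $(x,\xi^\perp)\in\cY$ with $\Phi(x,\xi^\perp)=\xi$. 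Conversely, any point of the fiber has base point $\nabla\psi(\xi)$. Hence, up to an $\cL^N$-null set, the count is the indicator of $T^{-1}(E)=\cC[E]\cap D$, and the integral equals
\[
 \beta(T^{-1}(E))=\alpha(E)=\mu(E)/M
\]
by Lemma~\ref{lem:transport-map}(2)–(3). This closes the argument.

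The main obstacle is the bookkeeping in the second step. One must check that a contact point $(x,\xi)$ with $x\in G_j$ actually lies in the chart where $\Phi$ is defined through $\Sigma_j$. One must also check that the measure-zero sets match: the set where $\psi$ is not differentiable, the base points outside $P$, and, in Lemma~\ref{lem:coarea}, the normal vectors over the $\cH^d$-null set $\Sigma_j\setminus P$ (handled by Lemma~\ref{lem:normal-geometry}). A further subtlety is that $\rho$ is only $\beta$-relevant, so the identification of the count need only hold $\rho\cL^N$-almost everywhere. Measurability of the integrand in $x$ follows from Lemma~\ref{lem:coarea} via Tonelli.
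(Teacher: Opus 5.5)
Your proposal is correct and follows essentially the same route as the paper. Both arguments apply Lemma~\ref{lem:coarea} to $\cY\cap\pp^{-1}(E)$ with weight $\rho\circ\Phi$, summing over the disjoint $G_j$. Both then use Lemmas~\ref{lem:contact-relation} and \ref{lem:transport-map} to identify the multiplicity at differentiability points of $\psi$ with the indicator of $\cC[E]$, and conclude from $\beta(\cC[E])=\alpha(E)$ by Radon--Nikodym uniqueness. One small imprecision: the identity $T^{-1}(E)=\cC[E]\cap D$ holds only after removing the $\cL^N$-null set $K\setminus D$, which does not affect the argument.
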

\begin{proof}
Denote the nonnegative fiber integral on the right by $J(x)$, and put it
zero off $G$. It is measurable by local bundle coordinates and Tonelli's
theorem. 
For a Borel $E\subset G$, apply Lemma~\ref{lem:coarea} on each disjoint
$G_j$ to the domain $\cY\cap\pp^{-1}(E\cap G_j)$ and weight
$\rho\circ\Phi$. Summing gives
\begin{equation}\label{eq:integrated-balance}
 \int_E J\dd\cH^d
 =\int_{B^N}\rho(\xi)
   \cH^0(\Phi^{-1}(\xi)\cap\cY\cap\pp^{-1}(E))\dd\xi.
\end{equation}
For every differentiability point of $\psi$, the multiplicity in this
integral is $1_{\cC[E]}(\xi)$ by Lemma~\ref{lem:contact-relation}. The
other slopes form a Lebesgue-null set. Hence
\[
 \int_EJ\dd\cH^d=\beta(\cC[E])=\alpha(E)
                    =\int_E\frac{\theta}{M}\dd\cH^d.
\]
Equality of these measures on the base proves \eqref{eq:fiber-balance}
by Radon--Nikodym uniqueness. Exceptional bases in $\Gamma\setminus G$
have zero $\alpha$-mass and hence zero active $\beta$-image.
\end{proof}

\section{Comparison with a \texorpdfstring{$C^2$}{C2} submanifold}
\label{sec:boundary-comparison}

Throughout this section $\Omega\subset\R^N$ is open, $2\le d<N$, and
$V=\mathbf v(S,\theta)$ is a rectifiable $d$-varifold with locally
bounded first variation, $\mu=\|V\|=\theta\cH^d\restr S$, and
$\theta\ge\theta_0>0$ $\mu$-almost everywhere.
Write $\Pi_x=\Pi_x^V$ for the orthogonal projection onto $T_xV$,
$\lambda=\|\delta V\|$, and $p=d/(d-1)$. The Lebesgue decomposition
of the first variation is written as
\begin{equation}\label{eq:first-variation-boundary}
 \delta V(X)=\int\Pi_x\bullet\Der X(x)\dd\mu(x)
 =-\int H\cdot X\dd\mu+\int\eta\cdot X\dd\sigma
 \qquad\bigl(X\in C_c^1(\Omega;\R^N)\bigr),
\end{equation}
where $H\in L^1_{\rm loc}(\mu;\R^N)$, $\sigma$ is a positive Radon
measure singular with respect to $\mu$, and $\eta$ is Borel with
$|\eta|=1$ $\sigma$-almost everywhere. Thus
$\lambda=|H|\mu+\sigma$; no condition on the support of $\sigma$ is
imposed. For a vector-valued $v\in C^1(\Omega;\R^N)$ set
$\Der^Vv(x)=\Der v(x)\circ\Pi_x$; for a scalar $h\in C^1(\Omega)$,
set $\nabla^Vh=\Pi\nabla h$.

For an embedded $C^2$ submanifold $\Sigma\subset\Omega$, denote its
nearest-point projection, where uniquely defined, by $\np_\Sigma$,
and put $\nu_\Sigma(z)=z-\np_\Sigma(z)$. Near every $a\in\Sigma$,
there is an open neighborhood $U\Subset\Omega$ on which
$\np_\Sigma$ is single-valued and $C^1$, with
\[
 \Der\np_\Sigma(a)=\Pi_a^\Sigma.
\]
The neighborhood may be chosen so that $\np_\Sigma(U)$ lies in a
compact subset of $\Sigma$. The function
$h=|\nu_\Sigma|=\dist(\,\cdot\,,\Sigma)$ is $C^2$ on $U\setminus\Sigma$;
see \cite[Theorems 2 and C]{LeobacherSteinicke2021}.

We use the non-sharp Sobolev inequality with the full first-variation
measure:
\begin{equation}\label{eq:full-sobolev}
 \|h\|_{L^p(\mu)}
 \le C_S\left(\int|\nabla^Vh|\dd\mu+\int h\dd\lambda\right)
 \qquad(0\le h\in C_c^1(\Omega)).
\end{equation}
Here $C_S$ depends only on $d,N,\theta_0$.
This is \cite[Theorem~7.1]{Allard1972}; see also
\cite[Theorem~10.1(2a)]{MenneWeak2016}. 

\begin{lemma}[A truncation estimate]\label{lem:scalar-boundary-estimate}
Let $v:\Omega\to[0,\infty)$ be Lipschitz with compact support in
$\Omega$, and suppose $v\in C^2(E)$, where $E=\{v>0\}$.
Set $\gamma=|\Pi\nabla v|$ on $E$ and $\gamma=0$ off $E$, and put
\[
 m=\mu(E),\qquad b=\lambda(E),\qquad
 L_v=\|\gamma\|_{L^\infty(\mu)}.
\]
Suppose $M_v\ge0$ is finite and
\begin{equation}\label{eq:scalar-truncation-energy}
 \frac1\varepsilon\int_{\{t<v<t+\varepsilon\}}\gamma^2\dd\mu
 \le M_v\qquad(t>0,\ \varepsilon>0).
\end{equation}
Then, for $C=C(d,N,\theta_0)$,
\begin{equation}\label{eq:scalar-boundary-estimate}
 \int\gamma\dd\mu
 \le C m^{1/d}M_v+C L_v b^{d/(d-1)}.
\end{equation}
\end{lemma}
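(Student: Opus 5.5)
The plan is to run a layer-cake argument on the superlevel sets of $v$. Put
\[
 f(t)=\mu\{v>t\},\qquad G(t)=\int_{\{v>t\}}\gamma\dd\mu\qquad(t\ge0).
\]
Both functions are nonincreasing, with $f(0)=m$ and $G(0)=\int\gamma\dd\mu$. I will derive two differential inequalities linking $-G'$ and $-f'$ and then integrate in $t$. The first inequality is an upper bound coming from \eqref{eq:scalar-truncation-energy}. By Cauchy--Schwarz on the layer $\{t<v<t+\varepsilon\}$,
\[
 G(t)-G(t+\varepsilon)\le\Bigl(\int_{\{t<v<t+\varepsilon\}}\gamma^2\dd\mu\Bigr)^{1/2}\bigl(f(t)-f(t+\varepsilon)\bigr)^{1/2}
 \le\bigl(\varepsilon M_v\bigr)^{1/2}\bigl(f(t)-f(t+\varepsilon)\bigr)^{1/2}.
\]
Formally this is $-G'\le(M_v(-f'))^{1/2}$.

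The second inequality is a lower bound from the Sobolev inequality \eqref{eq:full-sobolev}. I apply it to the truncations $h_{t,\varepsilon}=\min\{(v-t)^+/\varepsilon,1\}$. These are Lipschitz with compact support in $\Omega$. Since $v\in C^2(E)$, I first mollify them in the variable $v$, replacing the piecewise linear profile by a $C^1$ one, so that they are admissible $C^1_c$ functions; then I pass to the limit. This gives
\[
 f(t+\varepsilon)^{1/p}\le C_S\Bigl(\frac1\varepsilon\bigl(G(t)-G(t+\varepsilon)\bigr)+\lambda\{v>t\}\Bigr)
 \le C_S\Bigl(\frac{G(t)-G(t+\varepsilon)}{\varepsilon}+b\Bigr).
\]
Formally this is $-G'\ge f^{1/p}/C_S-b$.

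Now set $t_*=\inf\{t: f(t)^{1/p}<2C_Sb\}$ and split $\int\gamma\dd\mu=G(0)=(G(0)-G(t_*))+G(t_*)$. The tail is handled by the $L^\infty$ bound:
\[
 G(t_*)\le L_v f(t_*)\le L_v(2C_Sb)^{p},
\]
which is the second term of \eqref{eq:scalar-boundary-estimate}, since $p=d/(d-1)$. On $[0,t_*)$ the lower bound gives $-G'\ge f^{1/p}/(2C_S)$. Combining this with $(-G')^2\le M_v(-f')$ yields
\[
 -G'\le\frac{M_v(-f')}{-G'}\le 2C_SM_v\,(-f')\,f^{-1/p}.
\]
Since $1-1/p=1/d$, integrating over $[0,t_*)$ gives
\[
 G(0)-G(t_*)\le 2C_SM_v\int(-f')f^{-(d-1)/d}\dd t\le 2dC_S M_v\,m^{1/d},
\]
which is the first term. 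If $b=0$, then $t_*$ is the top of the range of $v$ and the tail term vanishes.

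The main obstacle is making the formal derivative calculus rigorous, because $f$ and $G$ are merely monotone and may have jumps or Cantor parts. I would first try to work entirely with the $\varepsilon$-difference quotients. Fix a grid $t_k=k\varepsilon$ and write $\Delta f_k=f(t_k)-f(t_{k+1})$ and $\Delta G_k=G(t_k)-G(t_{k+1})$. The Sobolev bound at level $t_k$ involves $f(t_{k+1})$, and I would use it together with $\Delta G_k\le(\varepsilon M_v\Delta f_k)^{1/2}$ to get, on grid cells where $f(t_{k+1})^{1/p}\ge2C_Sb$,
\[
 \Delta G_k\le 2C_SM_v\,\Delta f_k\, f(t_{k+1})^{-1/p}.
\]
This is the discrete analogue of the key inequality. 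The subtle point is to compare $\sum_k\Delta f_k f(t_{k+1})^{-1/p}$ with $\int_0^m s^{-1/p}\dd s=d\,m^{1/d}$. Evaluating at the right endpoint $f(t_{k+1})$ bounds each term by the integral over $[f(t_{k+1}),f(t_k)]$ only up to the ratio $(f(t_k)/f(t_{k+1}))^{1/p}$, which may be large across a jump of $f$. I expect to control this by noting that a cell with a large jump carries a lot of $\mu$-mass in the layer. If that fails, I would invoke the Sobolev inequality at the shifted level $t_{k-1}$ to pair $\Delta G_k$ with $f(t_k)$ instead. Letting $\varepsilon\downarrow0$, the discrete estimates converge to the asserted bound with a constant depending only on $d$, $N$ and $\theta_0$.
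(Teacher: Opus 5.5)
Your formal computation is the paper's proof in different notation. Your $-G'$ is the paper's coarea density $P(t)=\int_{Z_t}\theta\dd\cH^{d-1}$ (indeed $G(t)=\int_t^\infty P(s)\dd s$), and your $f$ is its $m(t)$. The threshold $b_*=(2C_Sb)^p$, the tail bound $L_vb_*$ and the constant $2dC_S$ all coincide.

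The gap is the step you flag yourself: you never carry out the passage from your two difference inequalities to the integrated bound. The right-endpoint sum $\sum_k\Delta f_k\,f(t_{k+1})^{-1/p}$ is an upper, not a lower, Riemann sum for $\int s^{-1/p}\dd s$. Across a jump of $f$ the correction factor $(f(t_k)/f(t_{k+1}))^{1/p}$ is unbounded. Your fallback fails as stated. The Sobolev inequality on $[t_{k-1},t_k]$ bounds $\Delta G_{k-1}$ from below, whereas the division $\Delta G_k\le\varepsilon M_v\Delta f_k/\Delta G_k$ needs a lower bound for $\Delta G_k$. Both ideas could be rescued, but neither argument is in the proposal:
\begin{itemize}
\item count the at most $\log_2(m/b_*)$ cells on which $f$ drops by more than half; each costs at most $(\varepsilon M_vm)^{1/2}$ by your Cauchy--Schwarz bound, and $b=0$ needs an auxiliary cutoff;
\item or use AM--GM and absorption, $\Delta G_k\le\tfrac12\Delta G_{k-1}+C_SM_v\Delta f_k\,f(t_k)^{-1/p}$.
\end{itemize}

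The missing idea, which the paper obtains from the coarea formula, is an asymmetry: $G$ is absolutely continuous, while $f$ need not be and does not have to be. This even follows from your own inequality. Cauchy--Schwarz over finitely many disjoint intervals $(a_i,b_i)\subset(0,\infty)$ gives $\sum_i(G(a_i)-G(b_i))\le(M_v\,m\sum_i(b_i-a_i))^{1/2}$. Hence $G(0)-G(t_*)=\int_0^{t_*}(-G')\dd t$.

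At a.e.\ $t$ both $f$ and $G$ are differentiable. Divide the first inequality by $\varepsilon$, use right-continuity of $f$ in the second, and let $\varepsilon\downarrow0$. This gives $(-G')^2\le M_v(-f')$ and $f^{1/p}\le C_S(-G'+b)$ pointwise a.e. These are the paper's $P(t)^2\le Q(t)J(t)\le M_v(-m'(t))$ and $m(t)^{(d-1)/d}\le C_S(P(t)+b)$. There, $J\le-m'$ says that the coarea density is dominated by the absolutely continuous part of $-Dm$.

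Your division step is then valid pointwise on $[0,t_*)$. Moreover, $\int_0^{t_*}(-f')f^{-1/p}\dd t=d\int_0^{t_*}-(f^{1/d})'\dd t\le d\,m^{1/d}$, because the a.e.\ derivative of a nonincreasing function integrates to at most its total decrease. The jumps and Cantor part of $f$ are thus discarded rather than controlled, and no grid is needed.
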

\begin{proof}
The tangential derivative of $v$ is zero $\mu$-almost everywhere on
$\{v=0\}$, so $\gamma=|\nabla^Vv|$ almost everywhere.
Put $m(t)=\mu(\{v>t\})$ for $t>0$. On
$Z_t=S\cap\{v=t,\ \gamma>0\}$ define
\[
 P(t)=\int_{Z_t}\theta\dd\cH^{d-1},\qquad
 Q(t)=\int_{Z_t}\gamma\theta\dd\cH^{d-1},\qquad
 J(t)=\int_{Z_t}\frac{\theta}{\gamma}\dd\cH^{d-1}.
\]
Coarea on $S$ gives
\begin{equation}\label{eq:scalar-level-coarea}
 \int_0^\infty P(t)\dd t=\int\gamma\dd\mu,
 \qquad J(t)\le-m'(t)\quad\text{for almost every }t>0.
\end{equation}
Indeed, $J$ is the density of the pushforward of
$\mu\restr\{v>0,\ \gamma>0\}$ under $v$; the remaining contribution
to the distributional measure $-Dm$ is nonnegative.
Coarea and \eqref{eq:scalar-truncation-energy} also yield
$Q(t)\le M_v$ for almost every $t>0$.

Apply \eqref{eq:full-sobolev} to
$T_{t,\varepsilon}(v)$, where
\[
 T_{t,\varepsilon}(s)=\min\{(s-t)_+/\varepsilon,1\}.
\]
Smooth approximations of this scalar function give admissible
$C_c^1$ tests, since $t>0$ and $v$ is $C^2$ on $E$.
The limit in the gradient term is valid because tangential gradients
vanish almost everywhere on every fixed level set.
Letting $\varepsilon\downarrow0$ and using coarea gives
\begin{equation}\label{eq:scalar-level-sobolev}
 m(t)^{(d-1)/d}\le C_S(P(t)+b)
 \quad\text{for almost every }t>0.
\end{equation}
Cauchy--Schwarz on $Z_t$ and \eqref{eq:scalar-level-coarea} imply
\begin{equation}\label{eq:scalar-flux-bound}
 P(t)^2\le Q(t)J(t)\le M_v[-m'(t)]
 \quad\text{for almost every }t>0.
\end{equation}

Set $b_*=(2C_Sb)^p$. For $m(t)>b_*$,
\eqref{eq:scalar-level-sobolev} gives
$P(t)\ge(2C_S)^{-1}m(t)^{(d-1)/d}$, so
\[
 P(t)\le2C_SM_v[-m'(t)]m(t)^{-(d-1)/d}.
\]
The total decrease of the nonincreasing function $m^{1/d}$ therefore
bounds the integral over these levels by $2dC_SM_vm^{1/d}$.
The complementary levels form a final interval. By coarea and
$\gamma\le L_v$, their contribution is at most $L_vb_*$: take levels
$t$ with $m(t)\le b_*$ and let them decrease to the initial endpoint
of that interval. Empty intervals and the case $m\le b_*$ are
included. Combining the two estimates with
\eqref{eq:scalar-level-coarea} proves the assertion.
\end{proof}

\begin{lemma}[$L^1$ comparison with a $C^2$ submanifold]
\label{lem:L1-boundary-comparison}
Assume \eqref{eq:first-variation-boundary}, with
$H\in L^1_{\rm loc}(\mu)$ and $\sigma\perp\mu$, and let
$\Sigma\subset\Omega$ be an embedded $d$-dimensional $C^2$
submanifold. Then, for $\mu$-almost every $a\in\Sigma$,
\begin{align}
 \int_{B_r^N(a)}|\nu_\Sigma|\dd\mu&=o(r^{d+2}),
 \label{eq:boundary-L1-height}\\
 \int_{B_r^N(a)}|\Pi_z^V-\Pi^\Sigma_{\np_\Sigma(z)}|\dd\mu(z)
 &=o(r^{d+1}).\label{eq:boundary-L1-tilt}
\end{align}
\end{lemma}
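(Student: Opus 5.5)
The plan is to reduce both estimates to a density-point selection, followed by a Caccioppoli-type inequality obtained by testing \eqref{eq:first-variation-boundary} with a truncated normal field. The height bound \eqref{eq:boundary-L1-height} is then upgraded from the set where $|\nu_\Sigma|$ is small to all of $B^N_r(a)$ by Lemma~\ref{lem:scalar-boundary-estimate}. The tilt bound \eqref{eq:boundary-L1-tilt} follows from the same inequality by Cauchy--Schwarz.

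\textbf{Step 1 (good points).} Since $\sigma\perp\mu$ and $\mu=\theta\cH^d\restr S$ is rectifiable, I restrict to points $a\in\Sigma$ at which the following hold:
\begin{itemize}
\item $\Theta^d(\mu,a)\in(0,\infty)$ exists and $T_aV=T_a\Sigma$;
\item $\mu(B_r^N(a)\setminus\Sigma)=o(r^d)$, by the density theorem applied to $\mu\restr(\R^N\setminus\Sigma)$;
\item $\sigma(B_r^N(a))=o(r^d)$, from $\sigma\perp\mu$ and the differentiation theorem;
\item $\int_{B_r^N(a)}|H|\dd\mu=O(r^d)$.
\end{itemize}
These hold $\mu$-almost everywhere on $\Sigma$. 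I also take $r$ so small that $B_{2r}^N(a)\subset U$, where $\np_\Sigma$ is $C^1$ and $\Der\np_\Sigma(z)=\Pi^\Sigma_{\np_\Sigma(z)}+O(|\nu_\Sigma(z)|)$.

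\textbf{Step 2 (truncated Caccioppoli inequality).} Fix $\varepsilon>0$ and put $s=\varepsilon r^2$. Let $\varphi$ be a cutoff supported in $B_{2r}^N(a)$ with $|\nabla\varphi|\le C/r$. I test \eqref{eq:first-variation-boundary} with
\[
 X=\varphi^2\,\min\{|\nu_\Sigma|,s\}\,\frac{\nu_\Sigma}{|\nu_\Sigma|},
\]
smoothed in the truncation variable. Using $\Der\nu_\Sigma=I-\Der\np_\Sigma$ and the identity $\Pi_z\bullet(I-\Pi^\Sigma)=\tfrac12|\Pi_z-\Pi^\Sigma|^2$ for equal-dimensional planes, the left side contains $\tfrac12\int_{\{|\nu_\Sigma|<s\}}\varphi^2|\Pi^V-\Pi^\Sigma\circ\np_\Sigma|^2\dd\mu$. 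On $\{|\nu_\Sigma|\ge s\}$ the radial derivative drops out, so no sign problem arises. The right side is bounded term by term:
\begin{itemize}
\item the $H$-term by $s\int|H|\dd\mu=O(\varepsilon r^{d+2})$;
\item the $\sigma$-term by $s\,\sigma(B_{2r})=o(r^{d+2})$;
\item the $\Der\varphi$-term, via Young's inequality, by $C\varepsilon^2r^{d+2}$ plus an absorbable term;
\item the curvature error by $Cs\int\varphi^2|\text{tilt}|\dd\mu$, again absorbable.
\end{itemize}
This gives $\int_{B_r\cap\{|\nu_\Sigma|<s\}}|\text{tilt}|^2\dd\mu\le C\varepsilon r^{d+2}+o(r^{d+2})$. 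Since $\mu(B_r)=O(r^d)$, Cauchy--Schwarz yields $\int_{B_r\cap\{|\nu_\Sigma|<s\}}|\text{tilt}|\dd\mu\le C\varepsilon^{1/2}r^{d+1}+o(r^{d+1})$. The height integral over $\{|\nu_\Sigma|<s\}$ is trivially at most $s\,\mu(B_r)=O(\varepsilon r^{d+2})$.

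\textbf{Step 3 (the region $\{|\nu_\Sigma|\ge s\}$; main obstacle).} The difficulty is that this region has mass only $o(r^d)$, while we need $o(r^{d+1})$ for the tilt and $o(r^{d+2})$ for the height. Here I will apply Lemma~\ref{lem:scalar-boundary-estimate} to
\[
 v=\varphi\,(|\nu_\Sigma|-s)_+,
\]
which is $C^2$ on $E=\{v>0\}\subset U\setminus\Sigma$. The relevant quantities are controlled as follows:
\begin{itemize}
\item $\gamma\le|\Pi^V-\Pi^\Sigma\circ\np_\Sigma|+C|\nu_\Sigma|/r$, so $L_v\le C$;
\item $m=\mu(E)=o(r^d)$, hence $m^{1/d}=o(r)$;
\item $b=\lambda(E)\le\int_E|H|\dd\mu+\sigma(B_{2r})=o(r^d)$, because $\int_E|H|\dd\mu\to$ small relative to $r^d$ by absolute continuity of the integral as $\mu(E)/r^d\to0$;
\item the energy bound \eqref{eq:scalar-truncation-energy} with $M_v\le Cr^{d}\cdot(\text{small})+\dots$ comes from repeating Step 2 with the test field $\varphi^2\,T_{t,\varepsilon'}(|\nu_\Sigma|)\,\nu_\Sigma/|\nu_\Sigma|$ localized to the slab $\{t<|\nu_\Sigma|<t+\varepsilon'\}$. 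This gives $\varepsilon'^{-1}\int_{\text{slab}}\gamma^2\dd\mu\le\int_E(|H|+1/r)\dd\mu+\sigma(E)+\dots$, so $M_v=o(r^{d-1})$.
\end{itemize}
Lemma~\ref{lem:scalar-boundary-estimate} then gives
\[
 \int\gamma\dd\mu\le Cm^{1/d}M_v+CL_vb^{d/(d-1)}=o(r)\,o(r^{d-1})+o(r^{d\cdot d/(d-1)}).
\]
Since $d^2/(d-1)\ge d+1$, this is $o(r^d)$, and inserting it into Sobolev \eqref{eq:full-sobolev} and coarea controls $\int v\dd\mu$.

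I expect getting the exponents to close to be the real obstacle: reaching $o(r^{d+2})$ for the height and $o(r^{d+1})$ for the tilt on $E$. I would first try iterating the slab estimate over dyadic thresholds $s_k=2^k\varepsilon r^2$, using $|\nu_\Sigma|\le Cr$ on $B_r$. If exponent bookkeeping still falls short, I would exploit $\Theta^d(\mu\restr(\R^N\setminus\Sigma),a)=0$ together with the $p=d/(d-1)$ gain in $b^{p}$.

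\textbf{Step 4 (conclusion).} Combining the bounds on $\{|\nu_\Sigma|<s\}$ from Step 2 with those on $E$ from Step 3, then letting $r\downarrow0$ and afterwards $\varepsilon\downarrow0$, gives \eqref{eq:boundary-L1-height} and \eqref{eq:boundary-L1-tilt}.
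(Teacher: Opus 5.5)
There is a genuine gap, and it is the one you flag yourself: nothing in the proposal closes the exponents. Worse, Step~2 already depends on what Step~3 is meant to prove.

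\textbf{Why Step~2 fails.} On $\{|\nu_\Sigma|\ge s\}$ your field is $\varphi^2 s\,n$ with $n=\nu_\Sigma/|\nu_\Sigma|$. There $\Pi\bullet\Der n=\alpha^2/|\nu_\Sigma|+O(1)$, where $\alpha^2=\Pi\bullet(I-\widehat\Pi-n\otimes n)$ and $\widehat\Pi=\Pi^\Sigma\circ\np_\Sigma$. Since $\tfrac12|\Pi-\widehat\Pi|^2=|\Pi n|^2+\alpha^2$, the coercive term on that region contains only the angular part $\alpha^2$, not $k^2=|\Pi n|^2$. Hence the cutoff term $2s\varphi\,(\Pi n)\cdot\nabla\varphi$ cannot be absorbed by Young's inequality. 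A priori it is only bounded by $(Cs/r)\int_{B_{2r}\cap\{|\nu_\Sigma|\ge s\}}k\dd\mu=\varepsilon\,o(r^{d+1})$, so your bound $C\varepsilon r^{d+2}+o(r^{d+2})$ is unproved.

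\textbf{Why Step~3 gives nothing.} You estimate $M_v$ using $k\le1$ and $|\nu_\Sigma|\le Cr$. The resulting $\int\gamma\dd\mu=o(r^d)$ already follows trivially from $\gamma\le C$ and $\mu(B_{2r}\setminus\Sigma)=o(r^d)$, and says nothing at scale $r^{d+1}$. Moreover, $\gamma$ controls only the $k$-component of the tilt on $E$; the $\alpha$-component needs a separate argument.

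\textbf{A smaller point.} $\int_E|H|\dd\mu=o(r^d)$ does not follow from absolute continuity of the integral. You need $a$ to be a Lebesgue point of $H$, or zero $d$-density of $|H|\mu\restr(\R^N\setminus\Sigma)$ at $a$.

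\textbf{The missing idea.} Keep $k$ and $h=|\nu_\Sigma|$ as unknowns in the energy bound, and iterate in the radius rather than in the height. Put $m_0(r)=\mu(B_r\setminus\Sigma)$ and $b_0(r)=\int_{B_r\setminus\Sigma}(1+|H|)\dd\mu+\sigma(B_r)$, both $o(r^d)$ at good points. The paper tests with $T(v_r)\nabla v_r$, $v_r=\chi_rh$, with no splitting at a threshold $s$. This yields both the slab energy bound and $\int_{B_r\setminus\Sigma}\alpha^2/h\dd\mu\le\mathcal M(r)$, where
\[
 \mathcal M(r)=C\bigl[b_0(2r)+r^{-1}I(2r)+r^{-2}J_0(2r)\bigr],\qquad
 I(r)=\int_{B_r}|\nabla^Vh|\dd\mu,\quad J_0(r)=\int_{B_r}h\dd\mu.
\]
Lemma~\ref{lem:scalar-boundary-estimate} and \eqref{eq:full-sobolev} then give, for $K(r)=I(r)+r^{-1}J_0(r)$, a recursion
\[
 K(r)\le c(r)K(2r)+o(r^{d+1}),\qquad c(r)\le C\,\frac{m_0(2r)^{1/d}}{r}\Bigl(1+\frac{m_0(2r)^{1/d}}{r}\Bigr)\to0.
\]
This uses $m_0(2r)^{1/d}=o(r)$ and $b_0(2r)^{d/(d-1)}=o(r^{d+1})$.

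Iterating over dyadic radii, needing only finiteness of $K$ on one fixed range of scales, gives $I(r)=o(r^{d+1})$ and $J_0(r)=o(r^{d+2})$. Then $\mathcal M(r)=o(r^d)$, and $\int_{B_r}\alpha\dd\mu\le\bigl(\int_{B_r\setminus\Sigma}\alpha^2/h\dd\mu\bigr)^{1/2}\bigl(\int_{B_r}h\dd\mu\bigr)^{1/2}=o(r^{d+1})$ recovers the full tilt. Your dyadic thresholds in the height variable do not substitute for this self-improving recursion in $r$.
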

\begin{proof}
Choose $a\in\Sigma$ with positive finite $d$-density and such that,
writing $B_r=B_r^N(a)$,
\[
 m_0(r):=\mu(B_r\setminus\Sigma)=o(r^d),\qquad
 b_0(r):=\int_{B_r\setminus\Sigma}(1+|H|)\dd\mu+\sigma(B_r)
 =o(r^d).
\]
Since $H\in L^1_{\rm loc}(\mu)$, the measure $(1+|H|)\mu$ is
locally finite. Differentiating its restriction to
$\Omega\setminus\Sigma$, and using $\sigma\perp\mu$, gives
$b_0(r)/\mu(B_r)\to0$ at $\mu$-almost every $a\in\Sigma$.
The same argument for $\mu\restr(\Omega\setminus\Sigma)$ gives
$m_0(r)/\mu(B_r)\to0$. The positive finite $d$-density then gives
the displayed decay rates. Moreover, $T_zV=T_z\Sigma$ for
$\mu$-almost every $z\in\Sigma$.
Fix a sufficiently small tubular neighborhood $U$ of $a$; all radii
below satisfy $B_{4r}\Subset U$. Constants may depend on this fixed
neighborhood but not on $r$ or on the truncation parameters.

\medskip\noindent\emph{Step 1: the distance and the tilt.}
On $U\setminus\Sigma$ put
\[
 h=|\nu_\Sigma|,\qquad n=\nu_\Sigma/h,\qquad
 \widehat\Pi_z=\Pi^\Sigma_{\np_\Sigma(z)}.
\]
Then $\nabla h=n$. The projection differential formula in
\cite[Theorem C]{LeobacherSteinicke2021}, with the bounded second
fundamental form on the fixed neighborhood, gives
\[
 D\nu_\Sigma=I-\widehat\Pi+\mathcal E,\qquad |\mathcal E|\le Ch.
\]
Here the bound follows by shrinking $U$ so that the inverse in that
formula is uniformly bounded. Differentiating $n=\nu_\Sigma/h$ yields
\[
 D^2h=\frac{I-\widehat\Pi-n\otimes n}{h}
             +\frac{\mathcal E}{h}.
\]
Define, at $\mu$-almost every point off $\Sigma$,
\[
 k=|\Pi n|=|\nabla^Vh|,\qquad
 \alpha^2=\Pi\bullet(I-\widehat\Pi-n\otimes n),\qquad \alpha\ge0.
\]
Since $n\in T_{\np_\Sigma(z)}^\perp\Sigma$, the operator
$I-\widehat\Pi-n\otimes n$ is an orthogonal projection. Thus
\begin{equation}\label{eq:distance-tilt-decomposition}
 \tfrac12|\Pi-\widehat\Pi|^2=k^2+\alpha^2,\qquad
 \Pi\bullet D^2h\ge\frac{\alpha^2}{h}-C.
\end{equation}
Set $k=\alpha=0$ on $\Sigma$; the first identity then holds
$\mu$-almost everywhere in $U$. Put
\[
 I(r)=\int_{B_r}k\dd\mu,\qquad
 J_0(r)=\int_{B_r}h\dd\mu,\qquad
 K(r)=I(r)+r^{-1}J_0(r).
\]

\medskip\noindent\emph{Step 2: truncated first-variation tests.}
Take $\chi_r\in C_c^\infty(B_{2r})$, with $0\le\chi_r\le1$,
$\chi_r=1$ on $B_r$, $|D\chi_r|\le C/r$, and
$|D^2\chi_r|\le C/r^2$. Set $v_r=\chi_rh$ and extend it by zero.
It is Lipschitz with compact support, is $C^2$ on
$E_r=\{v_r>0\}\subset B_{2r}\setminus\Sigma$, and satisfies
$|Dv_r|\le C$ there, since $h(z)\le|z-a|$.
Write $\gamma_r=|\nabla^Vv_r|$, with value zero off $E_r$.
By the product rule and \eqref{eq:distance-tilt-decomposition},
\[
 \Pi\bullet D^2v_r
 \ge\chi_r\frac{\alpha^2}{h}
       -C\left(1+r^{-1}k+r^{-2}h\right)
 \qquad\text{on }E_r.
\]

Let $T:\R\to[0,1]$ be smooth and nondecreasing, and vanish on
$(-\infty,t_0]$ for some $t_0>0$. The field
$X=T(v_r)\nabla v_r$, extended by zero, belongs to
$C_c^1(\Omega;\R^N)$: its support is a compact subset of $E_r$.
The first variation gives
\begin{align*}
 \int T'(v_r)\gamma_r^2\dd\mu
 +\int_{E_r}T(v_r)\Pi\bullet D^2v_r\dd\mu
 &=-\int T(v_r)H\cdot Dv_r\dd\mu
   +\int T(v_r)\eta\cdot Dv_r\dd\sigma.
\end{align*}
Consequently,
\begin{equation}\label{eq:distance-truncated-energy}
 \int T'(v_r)\gamma_r^2\dd\mu
 +\int_{E_r}T(v_r)\chi_r\frac{\alpha^2}{h}\dd\mu
 \le\mathcal M(r),
\end{equation}
where we fix
\begin{equation}\label{eq:distance-energy-size}
 \mathcal M(r)=C\left[b_0(2r)+r^{-1}I(2r)+r^{-2}J_0(2r)\right].
\end{equation}
Indeed, $|T(v_r)Dv_r|\le C$ and this field is supported off
$\Sigma$, so the curvature and boundary terms satisfy
\[
 \left|\int T(v_r)H\cdot Dv_r\dd\mu\right|
 +\left|\int T(v_r)\eta\cdot Dv_r\dd\sigma\right|
 \le C\left[\int_{B_{2r}\setminus\Sigma}|H|\dd\mu
                   +\sigma(B_{2r})\right].
\]
Thus local $L^1$ integrability of $H$ suffices for the first-variation
estimate, uniformly in the truncation parameters.

Approximate $T_{t,\varepsilon}(s)=\min\{(s-t)_+/\varepsilon,1\}$
by smooth nondecreasing truncations in
\eqref{eq:distance-truncated-energy}, and drop its second term.
The limit is justified by the vanishing of tangential gradients on
fixed level sets, and gives
\[
 \frac1\varepsilon\int_{\{t<v_r<t+\varepsilon\}}\gamma_r^2\dd\mu
 \le\mathcal M(r)\qquad(t,\varepsilon>0).
\]
Alternatively, take smooth $T$ tending increasingly to one on
$(0,\infty)$ and drop the first term. Monotone convergence gives
\begin{equation}\label{eq:distance-angular-bound}
 \int_{B_r\setminus\Sigma}\frac{\alpha^2}{h}\dd\mu
 \le\mathcal M(r).
\end{equation}
All vector-field tests vanish near $\Sigma$.

\medskip\noindent\emph{Step 3: the Sobolev estimate and iteration.}
Set $B(r)=\int\gamma_r\dd\mu$. Apply
Lemma~\ref{lem:scalar-boundary-estimate} to $v_r$, using
$\mu(E_r)\le m_0(2r)$, $\lambda(E_r)\le b_0(2r)$,
$\|\gamma_r\|_\infty\le C$, and the preceding energy estimate. Then
\begin{equation}\label{eq:boundary-B-bound}
 B(r)\le C m_0(2r)^{1/d}\mathcal M(r)+C b_0(2r)^p.
\end{equation}
On $B_r$, $v_r=h$, so $I(r)\le B(r)$. Moreover, H\"older's
inequality and \eqref{eq:full-sobolev} give
\begin{equation}\label{eq:boundary-J-bound}
 J_0(r)\le\int v_r\dd\mu
 \le C m_0(2r)^{1/d}\left[B(r)+r b_0(2r)\right].
\end{equation}
To apply the Sobolev inequality here, approximate $v_r$ by smooth
scalar truncations vanishing near its zero set. They converge
uniformly to $v_r$, with uniformly bounded gradients converging
tangentially $\mu$-almost everywhere. The boundary integrals converge
as well. We also used $0\le v_r\le2r$ and
$\lambda(E_r)\le b_0(2r)$.

To verify the scale of the error terms, put
$a_r=m_0(2r)^{1/d}$ and $b_r=b_0(2r)$.
Since $r^{-1}I(2r)+r^{-2}J_0(2r)\le2r^{-1}K(2r)$, combining
\eqref{eq:distance-energy-size}--\eqref{eq:boundary-J-bound} gives
\[
 \begin{split}
 K(r)\le{}&C\frac{a_r}{r}\left(1+\frac{a_r}{r}\right)K(2r)\\
           &+C\left(1+\frac{a_r}{r}\right)(a_rb_r+b_r^p).
 \end{split}
\]
Now $a_r=o(r)$, $b_r=o(r^d)$, and
$dp=d^2/(d-1)>d+1$. Hence $a_rb_r+b_r^p=o(r^{d+1})$ and
\[
 K(r)\le\varepsilon(r)K(2r)+o(r^{d+1}),\qquad
 \varepsilon(r)\longrightarrow0.
\]
For $A(r)=r^{-d-1}K(r)$, reduce the fixed radius so that
$2^{d+1}\varepsilon(r)\le1/2$. Thus
\[
 A(r)\le\omega(r)+\tfrac12 A(2r),\qquad \omega(r)\to0.
\]
Fix $\varepsilon>0$ and choose $r_\varepsilon>0$ so that the
recurrence holds and $\omega(t)\le\varepsilon$ for
$0<t<r_\varepsilon$. If $2^kr\in[r_\varepsilon,2r_\varepsilon)$,
iteration gives
\[
 A(r)\le2\varepsilon+
 2^{-k}\sup_{r_\varepsilon\le t<2r_\varepsilon}A(t).
\]
This supremum is finite, since $I$ and $J_0$ are finite and
nondecreasing. Letting $r\downarrow0$ and then
$\varepsilon\downarrow0$ proves
\begin{equation}\label{eq:distance-height-gradient-decay}
 I(r)=o(r^{d+1}),\qquad J_0(r)=o(r^{d+2}).
\end{equation}
The second estimate is \eqref{eq:boundary-L1-height}.

\medskip\noindent\emph{Step 4: recovery of the full tilt.}
Equations \eqref{eq:distance-energy-size} and
\eqref{eq:distance-height-gradient-decay} imply
$\mathcal M(r)=o(r^d)$. Hence \eqref{eq:distance-angular-bound}
and Cauchy--Schwarz give
\[
 \int_{B_r}\alpha\dd\mu
 \le\left(\int_{B_r\setminus\Sigma}\frac{\alpha^2}{h}\dd\mu\right)^{1/2}
     \left(\int_{B_r}h\dd\mu\right)^{1/2}
 =o(r^{d+1}).
\]
By \eqref{eq:distance-tilt-decomposition},
\[
 \int_{B_r}|\Pi-\widehat\Pi|\dd\mu
 \le\sqrt2\left[I(r)+\int_{B_r}\alpha\dd\mu\right]
 =o(r^{d+1}),
\]
which is \eqref{eq:boundary-L1-tilt}. In codimension one
$I-\widehat\Pi=n\otimes n$, so $\alpha=0$ and the estimate for
$I(r)$ already controls the full tilt.
\end{proof}

\section{The tangential Laplacian of an ambient convex function}\label{sec:laplacian}

Let $\Omega\subset\R^N$ be open, $2\le d<N$, and let
$V=\mathbf v(S,\theta)$ be rectifiable, with
$\mu=\|V\|=\theta\cH^d\restr S$ and $\theta\ge\theta_0>0$
$\mu$-almost everywhere. Assume the generalized first-variation
identity \eqref{eq:first-variation-boundary}, namely
\[
 \delta V(X)=-\int H\cdot X\dd\mu+\int\eta\cdot X\dd\sigma
 \qquad(X\in C_c^1(\Omega;\R^N)),
\]
where $H\in L^1_{\rm loc}(\mu;\R^N)$, $\sigma$ is a positive Radon
measure singular with respect to $\mu$, and $|\eta|=1$
$\sigma$-almost everywhere. We use the notation of Section~\ref{sec:boundary-comparison}.

Let $\Sigma_j\subset\Omega$ be $d$-dimensional $C^2$ submanifolds
whose union has full $\mu$-measure, and fix a convex $1$-Lipschitz
function $u:\R^N\to\R$. For each $j$, let $G_j\subset\Sigma_j$ be
the set supplied by Lemma~\ref{lem:second-order-expansion}.
Choose pairwise disjoint Borel sets $S_j\subset S\cap G_j$ whose
union has full $\mu$-measure, so that every $a\in S_j$ satisfies
\[
 T_aV=T_a\Sigma_j,\qquad
 0<\Theta^d(\mu,a)=\theta(a)<\infty,\qquad
 \sigma(B_r^N(a))=o(r^d),
\]
is a $\mu$-Lebesgue point of $H$ and $\Pi$, and satisfies the two
$L^1$ comparison estimates of
Lemma~\ref{lem:L1-boundary-comparison} with $\Sigma=\Sigma_j$.
The other requirements follow
by rectifiability and differentiation of measures, since
$\sigma\perp\mu$. On $S_j$ put
\[
 g=\nabla^{\Sigma_j}u=\nabla^V u,\qquad
 A=\ap\nabla_{\Sigma_j}^2u=\ap\nabla_S^2u,\qquad L=\trace A.
\]
Thus $\Pi g=g$ and $|g|\le1$. Define the distribution
\begin{equation}\label{eq:Lambda-definition}
 \Lambda(\varphi)=-\int g\cdot\nabla\varphi\dd\mu
 \qquad\bigl(\varphi\in C_c^\infty(\Omega)\bigr).
\end{equation}
The same formula is well-defined for scalar $C_c^1(\Omega)$ functions.

\begin{theorem}[Convex-Laplacian locality with generalized boundary]\label{thm:laplacian}
The distribution $\Lambda$ is a signed Radon measure. Its decomposition
relative to $\mu$ is
\begin{equation}\label{eq:Lambda-decomposition}
 \Lambda=\ell\mu+\Lambda^{\rm s},\qquad
 \ell=L+H\cdot g,\qquad
 \Lambda^{\rm s}\perp\mu,\qquad \Lambda^{\rm s}\ge-\sigma.
\end{equation}
In particular $L\in L^1_{\rm loc}(\mu)$, and for $0\le\varphi\in C_c^1(\Omega)$,
\begin{equation}\label{eq:distributional-inequality}
 \int\varphi\ell\dd\mu\le-\int g\cdot\nabla\varphi\dd\mu+\int\varphi\dd\sigma.
\end{equation}
If $\Omega=\R^N$ and $\spt\mu$ is compact, then $\Lambda$ is finite and
\begin{equation}\label{eq:integrated-ell}
 \int\ell\dd\mu\le\sigma(\R^N).
\end{equation}
\end{theorem}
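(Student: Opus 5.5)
The plan is to compare $\Lambda$ with a smooth approximation of $u$, to get a one-sided measure bound, and then to identify the absolutely continuous density by blow-up at points of $S_j$.

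\emph{Step 1: one-sided bound and Radon measure.} Let $u_\varepsilon=u*\phi_\varepsilon$. It is convex, $1$-Lipschitz and smooth, so $\Pi\bullet D^2u_\varepsilon\ge0$. For $0\le\varphi\in C_c^1(\Omega)$, test the first variation identity \eqref{eq:first-variation-boundary} with $X=\varphi\nabla u_\varepsilon$. This gives
\[
 -\int\Pi\nabla u_\varepsilon\cdot\nabla\varphi\dd\mu
 =\int\varphi\,\Pi\bullet D^2u_\varepsilon\dd\mu+\int\varphi H\cdot\nabla u_\varepsilon\dd\mu-\int\varphi\eta\cdot\nabla u_\varepsilon\dd\sigma .
\]
At $\mu$-almost every point of $S_j$ the function $u|_{\Sigma_j}$ is differentiable. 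Hence $\Pi\nabla u_\varepsilon\to g$ $\mu$-a.e., with all terms bounded by $1$; this is the standard fact that tangential gradients of mollifications converge wherever the restriction is differentiable and the tangent plane is approximate. Dominated convergence then yields
\[
 \Lambda(\varphi)-\int\varphi H\cdot g\dd\mu+\int\varphi\dd\sigma
 \ge\limsup_\varepsilon\int\varphi\,\Pi\bullet D^2u_\varepsilon\dd\mu\ge0 .
\]
So $\Lambda-H\cdot g\,\mu+\sigma$ is a positive distribution, hence a positive Radon measure $\nu$. It follows that $\Lambda=\nu+H\cdot g\,\mu-\sigma$ is a signed Radon measure. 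Writing $\nu=f\mu+\nu^s$ with $\nu^s\perp\mu$, we get $\Lambda^{\rm s}=\nu^s-\sigma\ge-\sigma$, and $\ell=f+H\cdot g$. Inequality \eqref{eq:distributional-inequality} then follows from $\nu^s\ge0$. In the compact case, take $\varphi=1$ near $\spt\mu$: the left side of the test vanishes, so $\Lambda(\R^N)=0$ and $\int\ell\dd\mu=-\Lambda^{\rm s}(\R^N)\le\sigma(\R^N)$, which is \eqref{eq:integrated-ell}.

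\emph{Step 2: identification $f=L$ (main obstacle).} Fix $a\in S_j$ that is a $\mu$-Lebesgue point of $f$ with $\nu^s(B_r)=o(r^d)$, and take the data $\widehat u_a$ from Lemma~\ref{lem:second-order-expansion}. Extend $\widehat u_a$ to a $C^2$ ambient function $U=\widehat u_a\circ\np_{\Sigma_j}$ near $a$. Test with $\varphi_r(z)=\phi((z-a)/r)$ and compare $\Lambda(\varphi_r)$ with $-\int\Pi\nabla U\cdot\nabla\varphi_r\dd\mu$. The latter equals, again by the first variation, $\int\varphi_r(\Pi\bullet D^2U+H\cdot\nabla U)\dd\mu+O(\sigma(B_{2r}))$. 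Lebesgue points then give
\[
 r^{-d}\int\varphi_r(\Pi\bullet D^2U+H\cdot \nabla U)\dd\mu\to\theta(a)\bigl(L(a)+H(a)\cdot g(a)\bigr)\int\phi\dd\cH^d ,
\]
where we use the tilt estimate \eqref{eq:boundary-L1-tilt} and $\Pi\bullet D^2U(a)=\trace A(a)$ at $a$, since $D\np=\Pi^\Sigma$.

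The difficulty is the error term $r^{-d}\int(g-\Pi\nabla U)\cdot\nabla\varphi_r\dd\mu$, which has an $r^{-1}$ from $\nabla\varphi_r$. Off $\Sigma_j$ neither $g$ nor $\nabla u$ is controlled pointwise, so I would not estimate the gradient difference directly. Instead I would use convexity to pass it onto $u-U$: integrate by parts again, with the mollified $u_\varepsilon$ replacing $u$. One obtains
\[
 \int\Pi\nabla(u_\varepsilon-U)\cdot\nabla\varphi_r\dd\mu
 =-\int(u_\varepsilon-U)\bigl(\Pi\bullet D^2\varphi_r+H\cdot\nabla\varphi_r\bigr)\dd\mu+\text{boundary terms}
\]
plus tilt terms in which $\Pi$ differs from $\Pi^{\Sigma_j}\circ\np$. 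Now $|u-U|\le|u-u\circ\np|+|u\circ\np-\widehat u_a\circ\np|\le|\nu_{\Sigma_j}|+\omega r^2$ on $B_{2r}$. The first piece is handled by the height estimate \eqref{eq:boundary-L1-height}, giving $r^{-2}\,o(r^{d+2})=o(r^d)$, and the second by \eqref{eq:second-order-expansion tilde u}. The $H$-term uses the Lebesgue-point property and the $\sigma$-term uses the decay of $\sigma$.

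Combining the two comparisons gives $f(a)=L(a)$ at $\mu$-a.e. $a$. Since $f\in L^1_{\rm loc}$, this also gives $L\in L^1_{\rm loc}(\mu)$. I expect the careful bookkeeping of the tilt terms, which use \eqref{eq:boundary-L1-tilt} multiplied by the $O(1)$ bounds on $\nabla(u-U)$, to be the delicate point.
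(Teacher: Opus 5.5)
Your Step~1 rests on a false claim. In the limit $\varepsilon\downarrow0$ you replace $\int\varphi H\cdot\nabla u_\varepsilon\dd\mu$ by $\int\varphi H\cdot g\dd\mu$. But only the tangential part $\Pi\nabla u_\varepsilon$ converges to $g$. The normal part $(I-\Pi)\nabla u_\varepsilon$ is not controlled by $g$, and $H$ is typically normal: for integral varifolds $H\cdot g=0$, while $H\cdot\nabla u_\varepsilon=H\cdot(I-\Pi)\nabla u_\varepsilon$.

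The asserted positivity of $\Lambda-(H\cdot g)\mu+\sigma$ actually fails. Take $u(x)=x\cdot e$ with $|e|=1$. Then $g=\Pi e$, and testing the first variation with $\varphi e$ gives $\Lambda=(H\cdot e)\mu-(\eta\cdot e)\sigma$. Hence
\[
 \Lambda-(H\cdot g)\mu+\sigma=\bigl(H\cdot(I-\Pi)e\bigr)\mu+(1-\eta\cdot e)\sigma .
\]
On the unit sphere in $\R^{d+1}$ one has $\sigma=0$ and $H(x)=-dx$, so the density $-d\,x\cdot e$ is negative on a hemisphere. Consistently with this, your Step~2 would force $L=f\ge0$, whereas here $L$ is the spherical Laplacian of $x\cdot e$, namely $-d\,x\cdot e$.

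The repair is the paper's. Use only $|H\cdot\nabla u_\varepsilon|\le|H|$ and $|\eta\cdot\nabla u_\varepsilon|\le1$, which give $\Lambda+|H|\mu+\sigma\ge0$. Since $\sigma\perp\mu$, the $\mu$-singular part of this positive measure is $\Lambda^{\rm s}+\sigma$. This yields $\Lambda^{\rm s}\ge-\sigma$, and then \eqref{eq:distributional-inequality} and \eqref{eq:integrated-ell} follow as you argue. In Step~2 you then identify $\ell$ directly, at Lebesgue points of $\ell$ where $|\Lambda^{\rm s}|(B_r^N(a))=o(r^d)$; no auxiliary density $f$ is needed.

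A second, smaller slip: $\np_\Sigma$ is only $C^1$ for a $C^2$ submanifold. So $U=\widehat u_a\circ\np_\Sigma$ is only $C^1$, and $\varphi_r\nabla U$ is not an admissible test field. For the main term use the $C^1$ field $X=\nabla^\Sigma\widehat u_a\circ\np_\Sigma$, as the paper does. The formula $D\np_\Sigma=\widehat\Pi-\mathcal E$ with $|\mathcal E|\le C|\nu_\Sigma|$, from the proof of Lemma~\ref{lem:L1-boundary-comparison}, gives $|\nabla U-X|\le C|\nu_\Sigma|$. The swap therefore costs $O\bigl(r^{-1}\int_{B_r^N(a)}|\nu_\Sigma|\dd\mu\bigr)=o(r^{d+1})$.

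With these repairs, your treatment of the error term is correct and genuinely different from the paper's. The paper proves the $L^1$ gradient jet \eqref{eq:g-strong-jet} through the pointwise subgradient comparison \eqref{eq:gradient-comparison}, which needs both \eqref{eq:boundary-L1-height} and the tilt decay \eqref{eq:boundary-L1-tilt}. You instead integrate by parts once more, using the identity
\[
 \int\Pi\nabla(u_\varepsilon-U)\cdot\nabla\varphi_r\dd\mu
 =-\int(u_\varepsilon-U)\bigl(\Pi\bullet D^2\varphi_r+H\cdot\nabla\varphi_r\bigr)\dd\mu
 +\int(u_\varepsilon-U)\,\eta\cdot\nabla\varphi_r\dd\sigma .
\]
This identity is exact: it is $\delta V$ applied to $(u_\varepsilon-U)\nabla\varphi_r$, which needs only $U\in C^1$ and $\varphi_r\in C^2$. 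So the tilt terms you anticipate never appear.

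On $B_r^N(a)$ you have $|u-U|\le|\nu_\Sigma|+\omega(r)r^2$ and $|\nu_\Sigma|\le r$. The three terms are then $o(r^d)$:
\begin{itemize}
\item the $\Pi\bullet D^2\varphi_r$ term by \eqref{eq:boundary-L1-height};
\item the $H$ term by splitting $|H|\le|H(a)|+|H-H(a)|$ at a Lebesgue point;
\item the $\sigma$ term by $\sigma(B_r^N(a))=o(r^d)$.
\end{itemize}
The main term needs only that $a$ be a Lebesgue point of $\Pi$.

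Your route thus avoids \eqref{eq:boundary-L1-tilt}, i.e.\ Step~4 of Lemma~\ref{lem:L1-boundary-comparison}, and also the convexity argument at points off $\Sigma$. The price is that you obtain only the tested comparison, not the stronger jet estimate \eqref{eq:g-strong-jet}.
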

\begin{proof}
\emph{Step 1: the measure property and the boundary term.}
Let $u_\varepsilon$ be smooth convex mollifications of $u$.
Then $|\nabla u_\varepsilon|\le1$ and $D^2u_\varepsilon\ge0$.
At every $x\in S_j$, restricting the supporting inequality for
$\xi\in\partial u(x)$ to $\Sigma_j$ and differentiating in opposite
tangent directions gives $\Pi_x\xi=g(x)$.
Every cluster point of $\nabla u_\varepsilon(x)$ belongs to
$\partial u(x)$, by locally uniform convergence and the supporting
inequalities for $u_\varepsilon$. Hence
\begin{equation}\label{eq:projected-mollifier}
 \Pi_x\nabla u_\varepsilon(x)\longrightarrow g(x)
 \quad\mu\text{-a.e.}
\end{equation}
For $0\le\varphi\in C_c^\infty(\Omega)$, the first variation applied to
$\varphi\nabla u_\varepsilon$ gives
\begin{align}
 -\int\Pi\nabla u_\varepsilon\cdot\nabla\varphi\dd\mu
 &=\int\varphi\,\Pi\bullet D^2u_\varepsilon\dd\mu
     +\int\varphi H\cdot\nabla u_\varepsilon\dd\mu
     -\int\varphi\eta\cdot\nabla u_\varepsilon\dd\sigma\notag\\
 &\ge-\int\varphi|H|\dd\mu-\int\varphi\dd\sigma.
 \label{eq:positive-distribution}
\end{align}
Dominated convergence on the left proves that
$\Lambda+|H|\mu+\sigma$ is a positive distribution, hence a Radon
measure. The measures $|H|\mu$ and $\sigma$ are locally finite, by
$H\in L^1_{\rm loc}(\mu)$ and the hypotheses on the first variation.
Thus $\Lambda$ is a signed Radon measure. Write its Lebesgue
decomposition provisionally as $\Lambda=\ell\mu+\Lambda^{\rm s}$.
Since $\sigma\perp\mu$, the part singular with respect to $\mu$ of
the preceding positive measure is $\Lambda^{\rm s}+\sigma$.
Consequently $\Lambda^{\rm s}\ge-\sigma$.
By $C^1$ approximation, its action on scalar $C_c^1$ functions is still
\eqref{eq:Lambda-definition}.

Refine the sets $S_j$ by deleting a further $\mu$-null set, so that
every $a\in S_j$ is a $\mu$-Lebesgue point of $\ell$ and
$|\Lambda^{\rm s}|(B_r^N(a))=o(r^d)$.
This is possible by differentiation of measures and the positive finite
$d$-density already imposed on $S_j$.

\medskip\noindent\emph{Step 2: a local comparison field.}
Fix $\Sigma=\Sigma_i$ and $a\in S_i$. Let $\widehat u=\widehat u_a$ be the
$C^2$ function in Lemma~\ref{lem:second-order-expansion}.
On a sufficiently small tubular neighborhood $U\Subset\Omega$ of $a$,
with $\np_\Sigma(U)\subset\operatorname{dom}\widehat u$, define
\begin{equation}\label{eq:X-extension}
 X=\nabla^\Sigma\widehat u\circ\np_\Sigma.
\end{equation}
Then $X\in C^1(U;\R^N)$.
The prescribed first and second derivatives of $\widehat u$ and
$\Der\np_\Sigma(a)=\Pi_a$ imply
\begin{equation}\label{eq:X-jet}
 X(a)=g(a),\qquad
 \Pi_a\bullet\Der X(a)
 =\trace\nabla_\Sigma^2\widehat u(a)=L(a).
\end{equation}
We claim that
\begin{equation}\label{eq:g-strong-jet}
 \int_{B_r^N(a)}|g-X|\dd\mu=o(r^{d+1})\qquad(r\downarrow0).
\end{equation}

Fix $0<r_a<r_{a,3}$ so small that
$\overline{B_{4r_a}^N(a)}\subset U$ and
$\overline{B_{4r_a}^N(a)}\cap\Sigma$ is a compact subset of
$\operatorname{dom}\widehat u$, where $r_{a,3}$ is supplied by
Lemma~\ref{lem:second-order-expansion}. Thus
\eqref{eq:second-order-expansion tilde u} holds with $c=3$ for
$0<r<r_a$.
Moreover, we observe that there are constants $0< C_a<\infty$ and  $ 0< \delta_a <1 $ such that for every 
$y\in B_{2r_a}^N(a)\cap\Sigma$ the maps
\[
 f_y:B_{\delta_a}^{T_y\Sigma}(0)\longrightarrow T_y^\perp\Sigma,
 \qquad f_y(0)=0,\quad \Der f_y(0)=0,
\]
parametrize $\Sigma$ by $p\mapsto y+p+f_y(p)$ and satisfy
\[
 |f_y(p)|\le C_a|p|,\qquad
 \|\Der f_y(p)\|\le C_a,\qquad
 \|\Der^2 f_y(p)\|\le C_a
 \quad (|p|<\delta_a).
\]
Choose $ 0 <\tau_a < 1 $ so that  $\tau_a r_a<\delta_a$ and
$(1+C_a)\tau_a\le\tfrac12$. Then
\[
 \{y+p+f_y(p):|p|<\tau_a r\}
 \subset\Sigma\cap B_{3r}^N(a) \quad \text{for $ y \in \Sigma\cap B^N_{2r}(a) $ and $ 0 < r < r_a $\,.}
\]
On the smaller domains $|p|<\tau_a r_a$, the scalar functions
$p\mapsto\widehat u(y+p+f_y(p))$ also have uniformly bounded second
derivatives for $y\in\Sigma\cap B_{2r_a}^N(a)$.
All these constants are fixed independently of $r,j,z,e,\tau$.

Choose $0<r< r_a$, $j\ge1$, and $z\in B_r^N(a)\cap S_j$. Then put
\[
 y=\np_\Sigma(z)\qquad \text{and} \qquad  w=z-y=\nu_\Sigma(z).
\]
Since $a\in\Sigma$, $|w|\le|z-a|$ and $|y-a|<2r$.
Lemma~\ref{lem:second-order-expansion}, with $c=3$, gives
$\omega=\omega_{a,3}$ such that
\begin{equation}\label{eq:second-order-uniform}
 |u(q)-\widehat u(q)|\le\omega(r)r^2
 \qquad(q\in\Sigma\cap B_{3r}^N(a)).
\end{equation}
For $e\in T_y\Sigma$ with $|e|=1$ and $0<\tau<\tau_a$, set
\[
s=\tau r \quad \text{and} \quad   y_\pm=y\pm se+f_y(\pm se)\in\Sigma\cap B_{3r}^N(a).
\]
The aforementioned uniform second-derivative bounds then give a
constant $C'_a<\infty$ such that
\begin{align*}
 |y_+-y - se| \le C'_as^2, \qquad |y_--y + se| \le C'_as^2\\
| \widehat u(y_+)-\widehat u(y) - sX(z)\cdot e | \le C'_as^2, \qquad | \widehat u(y_-)-\widehat u(y) + sX(z)\cdot e | \le C'_as^2\,.
\end{align*}
Write
\[
 R=u|_\Sigma-\widehat u,\qquad
 R_\pm=y_\pm-y\mp se,\qquad
 E_\pm=\widehat u(y_\pm)-\widehat u(y)\mp sX(z)\cdot e.
\]
Thus $|R_\pm|\le C'_as^2$ and $|E_\pm|\le C'_as^2$.
By \eqref{eq:second-order-uniform},
$|R(y_\pm)-R(y)|\le2\omega(r)r^2$. For every $\xi\in\partial u(z)$, the first-order properties on $S_j$ give
\begin{equation}\label{eq:tangential-subgradient}
	|\xi|\le1,\qquad g(z)=\Pi^V_z\xi.
\end{equation}
The supporting inequality at $z=y+w$ and $ \Lip(u) \leq 1 $ yields
\[
 \xi\cdot(y_\pm-y)
 \le u(y_\pm)-u(z)+\xi\cdot w
 \le u(y_\pm)-u(y)+2|w|.
\]
The two Taylor formulas give the exact identities
\begin{align*}
 \xi\cdot(y_\pm-y)
 &=\pm s\,\xi\cdot e+\xi\cdot R_\pm,\\
 u(y_\pm)-u(y)
 &=\widehat u(y_\pm)-\widehat u(y)+R(y_\pm)-R(y)\\
 &=\pm s\,X(z)\cdot e+E_\pm+R(y_\pm)-R(y).
\end{align*}
Substituting into the preceding inequality and rearranging gives,
for each sign,
\[
 \pm s\,(\xi-X(z))\cdot e
 \le E_\pm-\xi\cdot R_\pm+R(y_\pm)-R(y)+2|w|.
\]
Since $e\in T_y\Sigma$, one has
$\xi\cdot e=(\Pi_y^\Sigma\xi)\cdot e$. Using $|\xi|\le1$ and the
three remainder bounds, we obtain
\begin{align*}
 \pm s(\Pi_y^\Sigma\xi-X(z))\cdot e
 &\le |E_\pm|+|\xi|\,|R_\pm|+|R(y_\pm)-R(y)|+2|w|\\
 &\le 2C'_as^2+2\omega(r)r^2+2|w|.
\end{align*}
The coefficient $2C'_a$ accounts for the two separate quadratic
remainders $E_\pm$ and $\xi\cdot R_\pm$.
Divide by $s$, use both signs, and take the supremum over unit
$e\in T_y\Sigma$. Since $\Pi^\Sigma_y\xi$ and $X(z)$ are tangent at
$y$, \eqref{eq:tangential-subgradient} implies
\begin{equation}\label{eq:gradient-comparison}
 |g(z)-X(z)|\le|\Pi^V_z-\Pi^\Sigma_y|
   +2C'_a\tau r+\frac{2\omega(r)r}{\tau}
   +\frac{2|\nu_\Sigma(z)|}{\tau r}.
\end{equation}
All constants are independent of $j$, so the estimate \eqref{eq:gradient-comparison} holds for
$\mu$-almost every $z\in B_r^N(a)$.
Choose $D_a<\infty$ with $\mu(B_r^N(a))\le D_ar^d$ for small $r$.
Integration and \eqref{eq:boundary-L1-tilt} give
\[
 \frac1{r^{d+1}}\int_{B_r^N(a)}|g-X|\dd\mu
 \le o(1)+2D_aC'_a\tau+\frac{2D_a\omega(r)}{\tau}
   +\frac{2}{\tau r^{d+2}}\int_{B_r^N(a)}|\nu_\Sigma|\dd\mu.
\]
For each fixed $\tau\in(0,\tau_a)$, the last term tends to zero by
\eqref{eq:boundary-L1-height}. First let $r\downarrow0$, and then
$\tau\downarrow0$, to prove \eqref{eq:g-strong-jet}.

\Needspace{8\baselineskip}
\medskip\noindent\emph{Step 3: identification of the density.}
Let $a\in S_i$ be as in Step~2; all differentiation properties used
below hold by the choice of $S_i$ and its refinement in Step~1.
Take $0\le\varphi\in C_c^1(B_1^N(0))$ with
$\varphi=1$ on $B_{1/2}^N(0)$ and set
$\varphi_r(z)=\varphi((z-a)/r)$.
Since $\Pi g=g$,
$|g-\Pi X|=|\Pi(g-X)|\le|g-X|$.
Equation~\eqref{eq:g-strong-jet} and the first variation applied to
$\varphi_rX$, extended by zero, yield
\begin{align}
 \Lambda(\varphi_r)
 &=-\int\Pi X\cdot\nabla\varphi_r\dd\mu+o(r^d)\notag\\
 &=\int\varphi_r(\Pi\bullet\Der X+H\cdot X)\dd\mu
   -\int\varphi_r\eta\cdot X\dd\sigma+o(r^d).
 \label{eq:blowup-Lambda}
\end{align}
Notice $X(z)$ need not lie in $T_zV$. The additional boundary term
is negligible: $X$ is bounded near $a$ and the choice of $S_i$ gives
\[
 \left|\int\varphi_r\eta\cdot X\dd\sigma\right|
 \le\|\varphi\|_\infty\sup_{B_r^N(a)}|X|\,
       \sigma(B_r^N(a))=o(r^d).
\]
On the left, differentiation of $\ell\mu$ and the vanishing
$d$-density of $|\Lambda^{\rm s}|$ give
$\Lambda(\varphi_r)=\ell(a)\int\varphi_r\dd\mu+o(r^d)$.
Since $a$ is a $\mu$-Lebesgue point of the locally integrable field $H$,
\[
 \int_{B_r^N(a)}|H(z)-H(a)|\dd\mu(z)=o(r^d),\qquad
 \int_{B_r^N(a)}|H|\dd\mu=O(r^d).
\]
On the right, these estimates, continuity of $X,\Der X$, and the
$\mu$-Lebesgue value of $\Pi$ give, by \eqref{eq:X-jet},
\[
 \int\varphi_r(\Pi\bullet\Der X+H\cdot X)\dd\mu
 =\bigl(L(a)+H(a)\cdot g(a)\bigr)\int\varphi_r\dd\mu+o(r^d).
\]
Since
$\int\varphi_r\dd\mu\ge\mu(B_{r/2}^N(a))\ge c(a)r^d$
for small $r$, it follows that $\ell(a)=L(a)+H(a)\cdot g(a)$.
The sets $S_i$ cover $\mu$-almost all points, proving the density
identity. It also gives $L\in L^1_{\rm loc}(\mu)$, since
$\ell,H\in L^1_{\rm loc}(\mu)$ and $|g|\le1$.

The bound $\Lambda^{\rm s}\ge-\sigma$ now proves
\eqref{eq:distributional-inequality}.
If $\Omega=\R^N$ and $\spt\mu$ is compact, then $\Lambda$ is finite
and supported there. Testing with a function equal to one near
$\spt\mu$ gives $\Lambda(\R^N)=0$ and hence
$\int\ell\dd\mu=-\Lambda^{\rm s}(\R^N)\le\sigma(\R^N)$.
In this compactly supported case $\sigma$ is also supported on
$\spt\mu$, since $\|\delta V\|=|H|\mu+\sigma$, and hence has finite mass.
\end{proof}

\begin{remark}\label{rem:drift}
For integral varifolds, $H\perp T_xV$ almost everywhere, so $\ell=L$;
see \cite[Theorem 1 and Remark 3.7]{Menne13}.
For a smooth submanifold $\Sigma$ with smooth positive density,
\[
 H=H_\Sigma+\nabla^\Sigma\log\theta,\qquad
 \ell=L+\nabla^\Sigma\log\theta\cdot\nabla^\Sigma u.
\]
Thus the term $H\cdot g$ cannot be omitted for real multiplicities.
The proof above does not assume perpendicularity of $H$.
\end{remark}

\begin{corollary}[Normal mean curvature with generalized boundary]
\label{cor:normal-locality-boundary}
Under the hypotheses of Theorem~\ref{thm:laplacian}, for
$\mu$-almost every $x\in S$,
\[
 H_S(x,\omega)=H(x)\cdot\omega
 \qquad\text{for every }\omega\in T_x^\perp S.
\]
\end{corollary}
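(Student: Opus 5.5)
The plan is to apply Theorem~\ref{thm:laplacian} to linear functions, and possibly to quadratic-type convex functions, in order to read off the normal component of $H$.

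Take $u=\cc_\xi$ with $|\xi|\le1$; this is convex and $1$-Lipschitz, and its distribution $\Lambda$ is explicit. For $x\in S_j$ we have $g=\Pi_x\xi$. By \eqref{eq:affine-hessian}, the approximate covariant Hessian is $A=\cW_{(x,\xi^\perp)}\Sigma_j$, so $L(x)=H_S(x,\xi^\perp)$. Since $\nabla\varphi\cdot\Pi\xi=\Pi\bullet\Der(\varphi\xi)$, the first-variation identity gives
\[
 \Lambda(\varphi)=-\delta V(\varphi\xi)=\int\varphi H\cdot\xi\dd\mu-\int\varphi\eta\cdot\xi\dd\sigma .
\]
Hence $\Lambda$ has absolutely continuous density $H\cdot\xi$ and singular part $-\eta\cdot\xi\,\sigma$. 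Comparing with \eqref{eq:Lambda-decomposition}, we get
\[
 H\cdot\xi=\ell=H_S(x,\xi^\perp)+H\cdot\Pi\xi,
\]
that is, $H(x)\cdot\xi^\perp=H_S(x,\xi^\perp)$ for $\mu$-a.e.\ $x$.

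The exceptional null set depends on $\xi$. To handle this, apply the identity above for $\xi$ in a countable dense subset $Q$ of $B^N$. Take the union of the null sets, and also intersect with the sets $S_j$, which are fixed independently of $u$; there the tangent planes agree with those of $\Sigma_j$. At a remaining point $x$, every $\omega\in T_x^\perp S$ with $|\omega|\le1$ is a limit of $\xi^\perp$ with $\xi\in Q$. Both sides of the identity are continuous in $\omega$: the left side is linear, and the right side is the trace of the Weingarten map, which is linear in $\omega$ as well. By density the identity holds for all unit $\omega$, and by homogeneity for all $\omega\in T_x^\perp S$.

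The main obstacle is bookkeeping rather than analysis. One must check that the set $S_j$ and the function $A$ in Theorem~\ref{thm:laplacian} are defined so that, for the affine function $u=\cc_\xi$, $A$ coincides with the classical Hessian of $\cc_\xi|_{\Sigma_j}$, namely $\cW_{(x,\xi^\perp)}\Sigma_j$. This holds because $u|_{\Sigma_j}$ is $C^2$, so the approximate differential equals the classical one everywhere on $G_j$. One must also check that the refinements of $S_j$ made in the proof of the theorem only remove $\mu$-null sets, countably many times. Identifying $\ell$ from two Lebesgue decompositions uses uniqueness of the Radon--Nikodym density relative to $\mu$; this is immediate since $\sigma\perp\mu$.
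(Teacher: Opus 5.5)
Your proposal is correct and matches the paper's proof. You apply Theorem~\ref{thm:laplacian} to the affine function $\cc_\xi$, observe that $\Lambda(\varphi)=-\delta V(\varphi\xi)$, and compare absolutely continuous parts using $\sigma\perp\mu$ to get $H_S(x,\xi^\perp)=H(x)\cdot\xi^\perp$. The paper removes the $\xi$-dependence of the null set with a fixed orthonormal basis $e_1,\ldots,e_N$ and linearity, since the vectors $(I-\Pi_x)e_\alpha$ span $T_x^\perp S$; your countable dense set plus continuity works equally well. One small correction: the $S_j$ of the theorem do depend on $u$ through $G_j$, but this is harmless because you discard only countably many null sets and can fix the assignment of points to the $\Sigma_j$ in advance.
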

\begin{proof}
Fix a constant unit vector $e\in\R^N$ and apply
Theorem~\ref{thm:laplacian} to the affine function $u_e(x)=x\cdot e$.
Its tangential gradient is $g_e=\Pi e$. The first variation applied
to $\varphi e$ gives
\[
 \Lambda_e(\varphi)=-\delta V(\varphi e)
 =\int\varphi H\cdot e\dd\mu-\int\varphi\eta\cdot e\dd\sigma.
\]
Since $\sigma\perp\mu$, comparison of the absolutely continuous
densities with Theorem~\ref{thm:laplacian} yields
\[
 L_e:=\trace(\ap\nabla_S^2u_e)
 =H\cdot e-H\cdot\Pi e=H\cdot(I-\Pi)e.
\]
The affine-function identity in Section~\ref{sec:semiconvex} gives
$L_e(x)=H_S(x,(I-\Pi_x)e)$.
Apply this identity to a fixed orthonormal basis $e_1,\ldots,e_N$
and discard the union of the finitely many exceptional null sets.
At every remaining point, the vectors $(I-\Pi_x)e_\alpha$ span
$T_x^\perp S$; linearity in the normal vector proves the assertion.
\end{proof}

\section{The isoperimetric inequality}\label{sec:isoperimetric}

For $2\le d<N$, put $q=N-d$ and define
\begin{equation}\label{eq:Brendle-constant}
 \Ciso=
 \begin{cases}
 d\,\omega_d^{1/d},&q=1,\\[1mm]
 \displaystyle d\left(\frac{N\omega_N}{q\omega_q}\right)^{1/d},&q\ge2.
 \end{cases}
\end{equation}
For $q=2$ the second expression is also $d\omega_d^{1/d}$, because
$(d+2)\omega_{d+2}=2\omega_2\omega_d$.
These are the constants of \cite{Brendle2021,BrendleEichmairTransport}.
Sharpness for $q=1,2$ is proved in Corollary~\ref{cor:sharp-isoperimetric};
no optimality assertion is made for $q\ge3$.

Let $\Omega\subset\R^N$ be open and let $V=\mathbf v(S,\theta)$ be a
rectifiable $d$-varifold with
$\mu=\|V\|=\theta\cH^d\restr S$, $\theta\ge\theta_0>0$
$\mu$-almost everywhere. Assume that there are $d$-dimensional $C^2$
submanifolds $\Sigma_j\subset\Omega$ such that
$\mu(S\setminus\bigcup_j\Sigma_j)=0$, and that
\eqref{eq:first-variation-boundary} holds, where
$H\in L^1_{\rm loc}(\mu;\R^N)$, $\sigma$ is a positive Radon
measure singular with respect to $\mu$, and $\eta$ is Borel with
$|\eta|=1$ $\sigma$-almost everywhere. No condition is imposed on the
support of $\sigma$.
The total variation is then $\|\delta V\|=|H|\mu+\sigma$.
In particular $\sigma$ is the singular part of $\|\delta V\|$ relative
to $\mu$. The case without generalized boundary is obtained by
taking $\sigma=0$.

\begin{theorem}[Isoperimetric inequality with generalized boundary]
\label{thm:isoperimetric}
Let $\Omega=\R^N$ and suppose the preceding hypotheses hold.
Assume in addition that
\[
 0<M:=\mu(\R^N)<\infty,\qquad
 H\in L^1(\mu;\R^N),\qquad \sigma(\R^N)<\infty.
\]
Then $\theta^{1/d}\in L^1(\mu)$ and
\begin{equation}\label{eq:iso-refined}
 \sigma(\R^N)+\int|H|\dd\mu
 \ge\Ciso M^{-1/d}\int\theta^{1/d}\dd\mu
 \ge\Ciso\theta_0^{1/d}M^{(d-1)/d}.
\end{equation}
\end{theorem}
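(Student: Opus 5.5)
The plan is to prove \eqref{eq:iso-refined} first when $\spt\mu$ is compact and $q=N-d\ge2$, following Brendle--Eichmair with the transport set-up of Section~\ref{sec:plans}. Then I reduce $q=1$ to $q=2$ and remove compactness by truncation.

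\medskip\noindent\emph{Compact support, $q\ge2$.}
Take $\Gamma=\spt\mu$ and $\alpha=\mu/M$. For $0<r<1$, let $\rho_r$ be the normalized indicator of the annulus $\{r<|\xi|<1\}$, so $\rho_r=[\omega_N(1-r^N)]^{-1}$ there. Let $(u,\psi)$ be the optimal pair of Theorem~\ref{thm:dual}; then $u$ is convex and $1$-Lipschitz.

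\begin{enumerate}
\item By Lemma~\ref{lem:fiber-balance}, for $\mu$-a.e.\ $x$,
\[
 \frac{\theta(x)}{M}=\int_{\cY_x}\det\bigl[A-\cW_{(x,\omega)}S\bigr]\,\rho_r\bigl(\Phi(x,\omega)\bigr)\dd\cH^{q}(\omega).
\]
\item On $\cY_x$ the matrix $A-\cW_{(x,\omega)}S$ is nonnegative by \eqref{eq:contact-positive}. The AM--GM inequality and Corollary~\ref{cor:normal-locality-boundary} then give
\[
 \det\bigl[A-\cW_{(x,\omega)}S\bigr]\le d^{-d}\bigl(L-H\cdot\omega\bigr)^d .
\]
\item Theorem~\ref{thm:laplacian} gives $L=\ell-H\cdot g$. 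Since $|g+\omega|\le1$ by \eqref{eq:contact-positive}, we get
\[
 0\le L-H\cdot\omega=\ell-H\cdot(g+\omega)\le\ell+|H|.
\]
\item The $\omega$'s with $\rho_r(\Phi)\ne0$ satisfy $r^2<|g|^2+|\omega|^2<1$. This set has $\cH^q$-measure
\[
 \omega_q\Bigl[(1-|g|^2)^{q/2}-(r^2-|g|^2)_+^{q/2}\Bigr]\le\omega_q\tfrac q2(1-r^2)\qquad(q\ge2),
\]
by the mean value theorem for $t\mapsto t^{q/2}$ on $[0,1]$. Hence
\[
 \frac{\theta}{M}\le\frac{q\omega_q(1-r^2)}{2\omega_N(1-r^N)}\Bigl(\frac{\ell+|H|}{d}\Bigr)^d .
\]
Letting $r\uparrow1$ gives $\theta/M\le\frac{q\omega_q}{N\omega_N}\bigl(\frac{\ell+|H|}{d}\bigr)^d$ $\mu$-a.e. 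It also shows $\ell+|H|\ge0$ a.e.
\item Take $d$-th roots and integrate against $\mu$. Using \eqref{eq:integrated-ell},
\[
 \Ciso M^{-1/d}\int\theta^{1/d}\dd\mu\le\int(\ell+|H|)\dd\mu\le\sigma(\R^N)+\int|H|\dd\mu .
\]
\end{enumerate}

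The second inequality in \eqref{eq:iso-refined} is immediate from $\theta\ge\theta_0$. The remark after Lemma~\ref{lem:transport-map} says the annular density is allowed, since only $\beta$-null statements are used.

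\medskip\noindent\emph{Codimension one.}
For $q=1$ the bound in step 4 fails: the relevant normal set is an interval whose length is not $O(1-r)$ uniformly near $|g|=r$. Instead, regard $V$ as a varifold in $\R^{N+1}=\R^N\times\R$ via the inclusion. The quantities $\mu$, $\theta$, $H$, $\sigma$ and the $C^2$ covering are unchanged, and the codimension becomes $2$. The $q=2$ constant equals $d\omega_d^{1/d}=\Ciso$ for $q=1$, so the $q=2$ case applies.

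\medskip\noindent\emph{Removing compactness — the main obstacle.}
For a.e.\ $R$, the restriction $V_R=V\restr B_R^N(0)$ has first variation
\[
 \delta V_R=-H\mu\restr B_R+\eta\sigma\restr B_R+\text{(slice term)},
\]
where the slice term is carried by $S\cap\partial B_R$. It is $\mu$-singular, and its mass $s(R)$ satisfies $\int_0^\infty s(R)\dd R\le\mu(\R^N)<\infty$, by coarea for $|x|$ on $S$ with $|\nabla^V|x||\le1$. Hence $s(R_k)\to0$ along a sequence $R_k\to\infty$.

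$V_{R_k}$ is compactly supported, with the same density, $C^2$ covering and $H$, and singular part $\sigma\restr B_{R_k}$ plus the slice term. Apply the compact case, noting that the total-mass normalization also changes to $M_k\uparrow M$. Monotone convergence then yields $\theta^{1/d}\in L^1(\mu)$ and \eqref{eq:iso-refined}.

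The delicate point is justifying the slicing identity for the first variation and the representation of its singular part. One must check that the hypotheses of Section~\ref{sec:laplacian} hold for $V_R$ with $\Omega=\R^N$. I would do this via the standard smooth cutoff $\chi(|x|)$ and a limit in which the cutoff steepens; an alternative is to choose $R$ among Lebesgue points of $R\mapsto\mu(B_R)$.
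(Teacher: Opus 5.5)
Your proof is correct once one step is reordered. It reaches the result by a route that differs from the paper's in two substantive places.

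\emph{The reordering.} In step~4 you let $r\uparrow1$ in a pointwise inequality. But $\ell$ there is really $\ell_r$, built from the potential $u_r$ of the transport problem for $\rho_r$, and nothing controls $u_r$ or its approximate Hessian as $r\uparrow1$. For each fixed $r$, take $d$-th roots and integrate first:
\[
 d\Bigl(\frac{2\omega_N(1-r^N)}{q\omega_q(1-r^2)}\Bigr)^{1/d}M^{-1/d}\int\theta^{1/d}\dd\mu
 \le\int(\ell_r+|H|)\dd\mu\le\sigma(\R^N)+\int|H|\dd\mu .
\]
The right side is independent of $r$, so letting $r\uparrow1$ is now harmless; this is also how the paper passes to the limit. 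Your mean-value bound on the normal slice is cruder than the paper's exact $a_{\rho_t}$, but it has the same limit $q\omega_q/(N\omega_N)$ and avoids the beta-density projection formula.

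\emph{Codimension one.} The paper stays in $\R^N$ and uses the arcsine-type density \eqref{eq:codim-one-density}, whose $d$-dimensional marginal is uniform on $B^d$, so $a_\rho=\omega_d^{-1}$. You instead embed into $\R^{N+1}$, which is Brendle's original device. There $H$ becomes $(H,0)$, while $\mu$, $\theta$, $\sigma$ and the $C^2$ covering are unchanged, and $(d+2)\omega_{d+2}=2\omega_2\omega_d$ gives the same constant. This is more elementary, at the cost of one extra dimension.

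\emph{Noncompactness.} This is the larger difference. The paper never alters the varifold. It transports onto the compactly supported probability $\chi_R\mu/M_R$, applies Theorem~\ref{thm:laplacian} to the original $V$, and tests \eqref{eq:distributional-inequality} with $\varphi=\chi_R$. The only error is $-\int g_R\cdot\nabla\chi_R\dd\mu\le C_\chi M/R$, which is uniform in the potential because $|g_R|\le1$.

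Your restriction to $V\restr B_R$ instead needs two things: the slicing lemma for the first variation, and a check that $V\restr B_R$ satisfies every hypothesis of the theorem with $\sigma_R=\sigma\restr B_R+\|\tau_R\|$, where $\tau_R$ is the slice term. Your sketch is the standard and correct one: steepening radial cutoffs, coarea for $|x|$ on $S$, $\int_0^\infty s(R)\dd R\le M$, and radii with $\mu(\partial B_R)=\|\delta V\|(\partial B_R)=0$. Singularity of $\tau_R$ is automatic, since $\mu\restr B_R$ lives in the open ball and $\tau_R$ on $\partial B_R$.

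So your route works and cleanly reduces everything to the compactly supported case, where \eqref{eq:integrated-ell} applies directly. The paper's cutoff-in-the-target argument reaches the same conclusion without any slicing theory or re-verification of hypotheses for a modified varifold.
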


We first record the radial densities used in the proof. Each density
below is defined on all of $\R^N$, with value zero on and outside the
unit sphere.

\begin{lemma}[Normal slices of radial source densities]\label{lem:slices}
For a nonnegative radial Borel density $\rho$ with
$\int_{\R^N}\rho\dd\cL^N=1$ and support in $B^N$, set
\begin{equation}\label{eq:slice-constant}
 a_\rho=\sup_{z\in\R^d}\int_{\R^q}\rho(z,\omega)\dd\omega.
\end{equation}
If $q\ge2$ and $0<t<1$, then
\begin{equation}\label{eq:annular-density}
 \rho_t(\xi)=\frac{1_{\{t<|\xi|<1\}}}{\omega_N(1-t^N)}
\end{equation}
satisfies
\begin{equation}\label{eq:annular-slice}
 a_{\rho_t}=\frac{\omega_q(1-t^q)}{\omega_N(1-t^N)}
 \longrightarrow\frac{q\omega_q}{N\omega_N}
 \qquad(t\uparrow1).
\end{equation}
If $q=1$, the density
\begin{equation}\label{eq:codim-one-density}
 \rho(\xi)=
 \begin{cases}
 (\pi\omega_d\sqrt{1-|\xi|^2})^{-1},&|\xi|<1,\\
 0,&|\xi|\ge1,
 \end{cases}
\end{equation}
has integral one and $a_\rho=\omega_d^{-1}$.
\end{lemma}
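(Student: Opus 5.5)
The plan is to compute every slice directly with Fubini in the splitting $\R^N=\R^d\times\R^q$, writing $\xi=(z,\omega)$. Radiality means each slice integral depends only on $s=|z|^2$. So the whole lemma reduces to evaluating a one-variable function of $s$ and locating its supremum.

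\emph{Case $q\ge2$.} For fixed $z$ with $s=|z|^2<1$, the slice set is
\[
\{\omega : t^2-s<|\omega|^2<1-s\}.
\]
Its $\cL^q$-measure is $\omega_q f(s)$, where
\[
f(s)=(1-s)^{q/2}-\bigl((t^2-s)_+\bigr)^{q/2}.
\]
For $s\ge1$ the slice is empty. Hence
\[
a_{\rho_t}=\frac{\omega_q\sup_{0\le s<1}f(s)}{\omega_N(1-t^N)}.
\]
I will show that $f$ is nonincreasing on $[0,1)$, so the supremum is $f(0)=1-t^q$.
\begin{itemize}
\item On $[t^2,1)$, $f(s)=(1-s)^{q/2}$, which is clearly decreasing.
\item On $[0,t^2)$, the derivative is $f'(s)=\tfrac q2\bigl[(t^2-s)^{q/2-1}-(1-s)^{q/2-1}\bigr]$. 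Since $q/2-1\ge0$ and $0<t^2-s<1-s$, this is $\le0$.
\end{itemize}
This derivative sign is the one place where $q\ge2$ is used. For $q=1$ the exponent is negative, the inequality reverses, and the supremum is no longer at $z=0$; this is why a different density is used in that case.

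This gives the closed form in \eqref{eq:annular-slice}. The limit $(1-t^q)/(1-t^N)\to q/N$ as $t\uparrow1$ follows from l'Hôpital's rule, or by factoring $1-t^k=(1-t)\sum_{i<k}t^i$. The normalization $\int\rho_t=1$ holds because the annulus has volume $\omega_N(1-t^N)$.

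\emph{Case $q=1$.} Here $N=d+1$. Fix $z$ with $s=|z|^2<1$ and put $c=\sqrt{1-s}$. The slice integral is
\[
\frac{1}{\pi\omega_d}\int_{-c}^{c}(c^2-\omega^2)^{-1/2}\dd\omega=\frac{1}{\pi\omega_d}\cdot\pi=\omega_d^{-1},
\]
since the substitution $\omega=c\sin\phi$ turns the integral into $\pi$. This value is the same for every $|z|<1$, and the slice vanishes for $|z|\ge1$. Therefore $a_\rho=\omega_d^{-1}$.

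Fubini then gives the total integral as $\int_{B^d}\omega_d^{-1}\dd z=1$. The singularity of $\rho$ at the unit sphere is integrable, so $\rho$ is an admissible Borel density supported in $B^N$.

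No step is a real obstacle. The only point needing care is the monotonicity of $f$ in the case $q\ge2$, which identifies $z=0$ as the maximizing slice.
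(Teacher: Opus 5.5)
Your proof is correct. It reaches the same marginal densities as the paper, but by a more elementary route.

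The paper does not compute the slices by hand. It writes $\rho_t=(f_1-t^Nf_t)/(1-t^N)$, with $f_s$ the uniform probability density on the ball of radius $s$. It then invokes the projection formula for beta densities of Kabluchko, Temesvari and Th\"ale, with beta parameter $0$ for $q\ge2$ and $-1/2$ for $q=1$, to read off the $\R^d$-marginals. For $q\ge2$ it simply asserts that the resulting slice density is maximal at the origin.

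Your direct Fubini computation produces the same function $\omega_q\bigl[(1-s)^{q/2}-((t^2-s)_+)^{q/2}\bigr]$. Through the sign of $f'$ on $[0,t^2)$, you actually prove the maximality claim that the paper leaves unjustified. You also make visible that $q\ge2$ is exactly the condition needed. For $q=1$ the maximum of the annular slice moves to $|z|=t$, and $a_{\rho_t}=2(1-t^2)^{1/2}/(\omega_N(1-t^N))$ blows up as $t\uparrow1$, which is why a separate density is required there.

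The paper's framing buys a conceptual explanation of the codimension-one density: it is the beta density with parameter $-1/2$, whose codimension-one marginal is uniform on $B^d$. It also gives one formula that covers all projections, at the cost of relying on an external lemma. Your argument is self-contained and verifies the normalization and the supremum directly.
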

\begin{proof}
We use the projection formula for beta densities in
\cite[Lemma~4.3(a)]{KabluchkoTemesvariThale2019}, together with the
normalization in equation~(1.1) of that paper. If
$f_s(\xi)=(\omega_Ns^N)^{-1}1_{\{|\xi|<s\}}$, then
\[
 \rho_t=\frac{f_1-t^Nf_t}{1-t^N}
\]
up to values on the bounding spheres, which do not affect any normal
slice. Apply the projection formula with beta parameter $0$ to $f_1$
and $f_t$, and take this weighted difference of their marginals.
For $q\ge2$ the resulting radial slice density is maximal at the
origin, giving \eqref{eq:annular-slice}. The displayed limit follows from
$(1-t^q)/(1-t^N)\to q/N$.  When $q=1$, the same projection formula with
beta parameter $-1/2$ gives exactly the density
\eqref{eq:codim-one-density} and shows that its $d$-dimensional marginal
is the uniform probability density $\omega_d^{-1}$ on $B^d$; hence
$a_\rho=\omega_d^{-1}$ and $\int\rho\dd\cL^N=1$.  See also
\cite[Section~3]{BrendleEichmairTransport} for the annular limiting
construction used in the optimal-transport proof.
\end{proof}

\begin{proof}[Proof of Theorem~\ref{thm:isoperimetric}]
\emph{Step 1: the Laplacian and normal mean curvature.}
The global assumption $H\in L^1(\mu)$ implies the local integrability
required by Lemma~\ref{lem:L1-boundary-comparison} and
Theorem~\ref{thm:laplacian}. For any convex $1$-Lipschitz
$u:\R^N\to\R$, use the notation
$g,A,L,\Lambda$ of Theorem~\ref{thm:laplacian}. That theorem gives
$\Lambda=\ell\mu+\Lambda^{\rm s}$, $\ell=L+H\cdot g$,
$\Lambda^{\rm s}\ge-\sigma$, and the inequality
\eqref{eq:distributional-inequality}.
Corollary~\ref{cor:normal-locality-boundary} gives
\begin{equation}\label{eq:boundary-Laplacian-decomposition}
 H_S(x,\omega)=H(x)\cdot\omega
 \qquad(\omega\in T_x^\perp S),\quad\mu\text{-a.e. }x.
\end{equation}

\medskip\noindent\emph{Step 2: the pointwise transport estimate.}
Now take $\Omega=\R^N$.
Choose a radial $\chi\in C_c^\infty(\R^N)$, nonincreasing in the radius,
with $0\le\chi\le1$, $\chi=1$ on $B_1^N(0)$, and
$\spt\chi\subset B_2^N(0)$. Set
\[
 \chi_R(x)=\chi(x/R),\qquad
 M_R=\int\chi_R\dd\mu,\qquad
 \alpha_R=M_R^{-1}\chi_R\mu.
\]
For all sufficiently large $R$, $M_R>0$;
$\chi_R\uparrow1$, $M_R\uparrow M$, and
$|\nabla\chi_R|\le C_\chi/R$.
The probability $\alpha_R$ has compact support
$\Gamma_R=\spt\alpha_R$.

Fix one radial density $\rho$ from Lemma~\ref{lem:slices}, with
$0<a_\rho<\infty$, and apply the transport construction to
$\beta=\rho\cL^N$ and $\alpha_R$.
Let $u_R$ be its ambient convex $1$-Lipschitz potential, and let
$g_R,A_R,L_R,\ell_R$ be the quantities given by Step~1 for the
\emph{original} varifold $V$.
For the transport target use
\[
 S_R=S\cap\Gamma_R\cap\{\chi_R>0\},\qquad
 \theta_R=\theta\chi_R.
\]
The complement of $\Gamma_R$ in $S\cap\{\chi_R>0\}$ is $\mu$-null,
and $\alpha_R=M_R^{-1}\theta_R\cH^d\restr S_R$.
The same $\Sigma_j$ and their derivatives may be used on $S_R$;
Section~3 requires no positive lower bound for $\theta_R$.
All first-variation and Laplacian assertions remain assertions for $V$,
not for the reweighted varifold $\chi_RV$.

Let $\cY_{R,x}$ and $\Phi_R(x,\omega)=g_R(x)+\omega$ denote the exact
active fibers and the shifted map. Lemma~\ref{lem:fiber-balance} gives,
for $\mu$-almost every $x$ with $\chi_R(x)>0$,
\begin{equation}\label{eq:localized-fiber-balance}
 \frac{\theta(x)\chi_R(x)}{M_R}
 =\int_{\cY_{R,x}}\det\mathcal B_R(x,\omega)
      \rho(\Phi_R(x,\omega))\dd\cH^q(\omega),
 \qquad \mathcal B_R=A_R-\cW_{(x,\omega)}S.
\end{equation}
On these fibers, $\mathcal B_R\ge0$ and $|\Phi_R|\le1$.
All excluded base sets have zero target mass and hence
zero active $\beta$-image by Lemma~\ref{lem:transport-map}.
This remains true for annular densities, which need not be positive
everywhere on the ball.

Using \eqref{eq:boundary-Laplacian-decomposition} and $\ell_R=L_R+H\cdot g_R$, at every active
normal vector over such a point,
\begin{align}
 \trace\mathcal B_R(x,\omega)
 &=L_R(x)-H_S(x,\omega)\notag\\
 &=L_R(x)-H(x)\cdot\omega\notag\\
 &=\ell_R(x)-H(x)\cdot\Phi_R(x,\omega)
 \le\ell_R(x)+|H(x)|.
 \label{eq:contact-trace}
\end{align}
The positive left side of \eqref{eq:localized-fiber-balance} implies
that some active matrix has positive determinant. Its trace is
positive, so $\ell_R(x)+|H(x)|>0$.
The arithmetic--geometric mean inequality for the nonnegative
eigenvalues and the slice bound give
\begin{align}
 \frac{\theta(x)\chi_R(x)}{M_R}
 &\le\left(\frac{\ell_R(x)+|H(x)|}{d}\right)^d
       \int_{\cY_{R,x}}\rho(g_R(x)+\omega)\dd\cH^q(\omega)\notag\\
 &\le a_\rho\left(\frac{\ell_R(x)+|H(x)|}{d}\right)^d.
 \label{eq:pointwise-determinant}
\end{align}
Indeed $g_R(x)\perp\omega$, and radiality identifies the integral over
the whole normal space with a slice in \eqref{eq:slice-constant}.
Taking the positive $d$th root yields
\begin{equation}\label{eq:pointwise-root}
 \ell_R+|H|\ge d\,a_\rho^{-1/d}
       \left(\frac{\theta\chi_R}{M_R}\right)^{1/d}
 \quad\mu\text{-a.e. on }\{\chi_R>0\}.
\end{equation}
The exceptional sets may depend on $R$ and $\rho$; only the integrated
inequalities for each fixed choice will be used.

\medskip\noindent\emph{Step 3: integration and passage to finite mass.}
Multiply \eqref{eq:pointwise-root} by $\chi_R$ and integrate.
Its nonnegative right side is integrable because the left side has a
locally integrable absolute value on $\spt\chi_R$.
Equation~\eqref{eq:distributional-inequality}, with $\varphi=\chi_R$, gives
\begin{align}
 d\,a_\rho^{-1/d}M_R^{-1/d}
       \int\theta^{1/d}\chi_R^{1+1/d}\dd\mu
 &\le\int\chi_R(\ell_R+|H|)\dd\mu\notag\\
 &\le-\int g_R\cdot\nabla\chi_R\dd\mu
       +\int\chi_R|H|\dd\mu+\int\chi_R\dd\sigma\notag\\
 &\le\frac{C_\chi M}{R}
       +\int\chi_R|H|\dd\mu+\int\chi_R\dd\sigma.
 \label{eq:isoperimetric-cutoff}
\end{align}
The cutoff error uses only $|g_R|\le1$ and is uniform in the potential.
As $R\to\infty$, monotone convergence applies to
$\int\theta^{1/d}\chi_R^{1+1/d}\dd\mu$, initially allowing value
$+\infty$, and $M_R\to M>0$.
On the right, the limit is finite by $H\in L^1(\mu)$ and finiteness of
$\sigma$. Thus
\begin{equation}\label{eq:isoperimetric-source-bound}
 \sigma(\R^N)+\int|H|\dd\mu
 \ge d\,a_\rho^{-1/d}M^{-1/d}\int\theta^{1/d}\dd\mu,
\end{equation}
and this also proves $\theta^{1/d}\in L^1(\mu)$.
No convergence of $u_R$ or of its derivatives is required.
When $\spt\mu$ is compact, the same estimate follows directly by
transporting to $\mu/M$ and testing \eqref{eq:distributional-inequality}
with a cutoff equal to one near $\spt\mu$.

\medskip\noindent\emph{Step 4: the constant.}
For $q\ge2$, apply \eqref{eq:isoperimetric-source-bound} to each
$\rho_t$ and let $t\uparrow1$. Lemma~\ref{lem:slices} gives
$d(N\omega_N/(q\omega_q))^{1/d}$.
For $q=1$, use \eqref{eq:codim-one-density} to get $d\omega_d^{1/d}$. Finally,
$\int\theta^{1/d}\dd\mu\ge\theta_0^{1/d}M$ proves
\eqref{eq:iso-refined}.
\end{proof}

\begin{corollary}[Sharpness in codimensions one and two]
\label{cor:sharp-isoperimetric}
Under the hypotheses of Theorem~\ref{thm:isoperimetric}, if
$N-d\in\{1,2\}$, then
\begin{equation}\label{eq:sharp-isoperimetric}
 \sigma(\R^N)+\int|H|\dd\mu
 \ge d\omega_d^{1/d}M^{-1/d}\int\theta^{1/d}\dd\mu
 \ge d\omega_d^{1/d}\theta_0^{1/d}M^{(d-1)/d}.
\end{equation}
The coefficient $d\omega_d^{1/d}$ is optimal, for every fixed
$\theta_0>0$. Equality in both inequalities is attained by a flat round
$d$-ball with constant multiplicity $\theta_0$.
For integral varifolds with $\theta_0=1$, equality is attained by a
multiplicity-one flat round ball. Thus optimality already holds with
$H=0$ and a smooth $(d-1)$-dimensional generalized boundary.
\end{corollary}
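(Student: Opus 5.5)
The inequality \eqref{eq:sharp-isoperimetric} is Theorem~\ref{thm:isoperimetric} once we check that $\Ciso=d\omega_d^{1/d}$ for $q=N-d\in\{1,2\}$. For $q=1$ this is the definition \eqref{eq:Brendle-constant}. For $q=2$ we use the identity $(d+2)\omega_{d+2}=2\omega_2\omega_d$, recorded after \eqref{eq:Brendle-constant}. It follows from $\omega_{k+2}=2\pi\omega_k/(k+2)$ and $\omega_2=\pi$, and gives $N\omega_N/(2\omega_2)=\omega_d$. The second inequality in \eqref{eq:sharp-isoperimetric} is $\int\theta^{1/d}\dd\mu\ge\theta_0^{1/d}M$, which holds because $\theta\ge\theta_0$.

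For optimality and equality, we test the flat ball. Let $D=B^d_r(0)\times\{0\}\subset\R^d\times\R^q$ and $V=\mathbf v(D,\theta_0)$, so that $\mu=\theta_0\cH^d\restr D$. The hypotheses of Theorem~\ref{thm:isoperimetric} hold:
\begin{itemize}
\item $S=D$ is contained in the single $C^2$ submanifold $\R^d\times\{0\}$;
\item $M=\theta_0\omega_dr^d$ is positive and finite.
\end{itemize}
To identify the first variation, we use the divergence theorem on the flat disk. Since $\Pi\equiv\Pi_{\R^d}$, for $X\in C^1_c(\R^N;\R^N)$ we have $\Pi\bullet\Der X=\operatorname{div}_{\R^d}(\Pi X)$ on $D$. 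Hence
\[
 \delta V(X)=\theta_0\int_{\partial D}X\cdot\nu\dd\cH^{d-1},
\]
with $\nu$ the outward unit conormal in $\R^d\times\{0\}$. Therefore $H=0$, $\eta=\nu$, and $\sigma=\theta_0\cH^{d-1}\restr\partial D$. The measure $\sigma$ is finite and singular with respect to $\mu$, because $\cH^d(\partial D)=0$.

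Now compare the two sides. The left side of \eqref{eq:sharp-isoperimetric} is
\[
 \theta_0\,d\omega_dr^{d-1}.
\]
The middle term is
\[
 d\omega_d^{1/d}(\theta_0\omega_dr^d)^{-1/d}\,\theta_0^{1/d}\,\theta_0\omega_dr^d=\theta_0 d\omega_dr^{d-1},
\]
and the last term takes the same value $d\omega_d^{1/d}\theta_0^{1/d}(\theta_0\omega_dr^d)^{(d-1)/d}=\theta_0 d\omega_d r^{d-1}$. So equality holds throughout. In the integral case $\theta_0=1$ this is the multiplicity-one ball, with $H=0$ and the smooth $(d-1)$-sphere $\partial D$ as generalized boundary.

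Optimality of the coefficient follows from the equality case. Suppose some constant $c>d\omega_d^{1/d}$ could replace $d\omega_d^{1/d}$ in the outer inequality for fixed $\theta_0$. The ball example would then give $\theta_0 d\omega_dr^{d-1}\ge c\,\theta_0^{1/d}(\theta_0\omega_dr^d)^{(d-1)/d}$, that is, $d\omega_d^{1/d}\ge c$, a contradiction. The same argument applies to the middle expression.

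The only delicate point is the first-variation computation for the disk, namely the sign convention in \eqref{eq:first-variation-boundary} and the fact that normal components of $X$ contribute nothing. This is routine, because $\Pi\bullet\Der X$ only involves derivatives of $\Pi X$ in tangential directions.
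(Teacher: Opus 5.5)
Your proposal is correct and follows the paper's proof. You reduce to Theorem~\ref{thm:isoperimetric} by checking that $\Ciso=d\omega_d^{1/d}$ for $q\in\{1,2\}$, then test a flat round $d$-disk of constant multiplicity $\theta_0$, for which $H=0$, $\sigma=\theta_0\cH^{d-1}\restr\partial D$, and both inequalities become equalities. The paper leaves implicit the recursion behind $(d+2)\omega_{d+2}=2\omega_2\omega_d$ and the short contradiction argument for optimality; you spell both out.
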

\begin{proof}
The values of \eqref{eq:Brendle-constant} coincide in these two
codimensions. Let $P\subset\R^N$ be an affine $d$-plane, $a\in P$,
$s>0$, and $D=P\cap B_s^N(a)$. For $V=\mathbf v(D,\theta_0)$,
the divergence theorem in $P$ gives
\[
 H=0,\qquad
 \sigma=\theta_0\cH^{d-1}\restr\partial_P D,\qquad
 \eta(x)=(x-a)/s\quad(x\in\partial_P D).
\]
The set $B=\partial_PD$ is closed and smooth and satisfies $\mu(B)=0$.
Furthermore,
\[
 M=\theta_0\omega_ds^d,\qquad
 \sigma(\R^N)=\theta_0d\omega_ds^{d-1},\qquad
 \int\theta^{1/d}\dd\mu=\theta_0^{1/d}M.
\]
Substitution gives equality throughout
\eqref{eq:sharp-isoperimetric}; taking $\theta_0=1$ proves optimality
already in the integral class.
\end{proof}

\begin{corollary}[Integral varifolds with generalized boundary]
\label{cor:integral-iso}
Let $V$ be a nonzero integral $d$-varifold in $\R^N$, $2\le d<N$,
with finite mass and finite total first variation:
\[
 0<M:=\|V\|(\R^N)<\infty,\qquad \|\delta V\|(\R^N)<\infty.
\]
Put $\mu=\|V\|$ and $\theta=\Theta^d(\mu,\cdot)$ almost everywhere,
and write the Lebesgue decomposition of $\delta V$ as in
\eqref{eq:first-variation-boundary}. Then $\theta^{1/d}\in L^1(\mu)$ and
\[
 \begin{aligned}
 \|\delta V\|(\R^N)
 &=\sigma(\R^N)+\int|H|\dd\mu\\
 &\ge\Ciso M^{-1/d}\int\theta^{1/d}\dd\mu
 \ge\Ciso M^{(d-1)/d}.
 \end{aligned}
\]
\end{corollary}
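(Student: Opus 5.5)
The plan is to reduce Corollary~\ref{cor:integral-iso} to Theorem~\ref{thm:isoperimetric} by checking its hypotheses for an integral varifold with finite mass and finite total first variation.

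\emph{Rectifiable structure.} Since $V$ is integral, it has the form $V=\mathbf v(S,\theta)$ with $S$ countably $d$-rectifiable and $\theta=\Theta^d(\mu,\cdot)$ a positive integer $\mu$-almost everywhere. Hence $\theta\ge\theta_0:=1$.

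\emph{First variation.} Since $\|\delta V\|(\R^N)<\infty$, the Radon measure $\|\delta V\|$ is finite. Its Lebesgue decomposition relative to $\mu$ gives $\delta V=-H\mu+\eta\sigma$. Here $H\in L^1(\mu;\R^N)$ because $\int|H|\dd\mu\le\|\delta V\|(\R^N)$. The measure $\sigma$ is finite and singular with respect to $\mu$, and $|\eta|=1$ $\sigma$-almost everywhere by polar decomposition. Mutual singularity of $|H|\mu$ and $\sigma$ gives the identity $\|\delta V\|(\R^N)=\sigma(\R^N)+\int|H|\dd\mu$.

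\emph{$C^2$-rectifiability.} This is the only substantive step. We must show that $\mu$ is carried by countably many $d$-dimensional $C^2$ submanifolds. For integral varifolds with locally bounded first variation, this is Menne's $C^2$-rectifiability theorem (\cite{Menne13}, Theorem~1), which applies precisely under the assumption that $\|\delta V\|$ is a Radon measure. It yields $\Sigma_j$ with $\mu(S\setminus\bigcup_j\Sigma_j)=0$. Note that no integrability beyond $L^1$ of $H$ is needed there; this is why the integral case needs nothing beyond finite mass and finite total first variation.

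\emph{Conclusion.} All hypotheses of Theorem~\ref{thm:isoperimetric} now hold with $\Omega=\R^N$, $0<M<\infty$, and $\theta_0=1$. That theorem gives $\theta^{1/d}\in L^1(\mu)$ together with both inequalities, and the final bound uses $\theta_0^{1/d}=1$. Compactness of the support is never used, since the theorem itself handles noncompact support through the cutoffs $\chi_R$.
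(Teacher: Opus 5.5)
Your proposal is correct and follows the paper's proof essentially verbatim. The Lebesgue decomposition of the finite measure $\delta V$ gives $H\in L^1(\mu)$, $\sigma\perp\mu$, $|\eta|=1$ and the mass identity; integrality gives $\theta\ge1$; Menne's Theorem~1 supplies the $C^2$ submanifolds; and Theorem~\ref{thm:isoperimetric} with $\theta_0=1$ then yields both inequalities and $\theta^{1/d}\in L^1(\mu)$.
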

\begin{proof}
The Lebesgue decomposition of the finite vector-valued measure
$\delta V$ gives $H\in L^1(\mu)$, $\sigma\perp\mu$,
$|\eta|=1$ $\sigma$-almost everywhere, and
\[
 \int|H|\dd\mu+\sigma(\R^N)=\|\delta V\|(\R^N)<\infty.
\]
Integrality gives $\theta\ge1$ almost everywhere.
Since $V$ has locally bounded first variation,
\cite[Theorem 1]{Menne13} supplies $C^2$ submanifolds $\Sigma_j$
whose union has full $\mu$-measure. 
All hypotheses of Theorem~\ref{thm:isoperimetric} therefore hold with
$\theta_0=1$. Its conclusion proves the displayed inequalities and
integrability of $\theta^{1/d}$.
\end{proof}

\textbf{Ackowledgements.} AI tools (Open AI, GPT 6) were used during the preparation of this manuscript.

\end{document}